\documentclass[11pt]{amsart}

\usepackage{subfiles}  

\usepackage{geometry}
\usepackage{fullpage}
\usepackage{amstext}
\usepackage{amsthm}
\usepackage{amssymb}
\usepackage[colorlinks=true, linkcolor=blue, citecolor=green, urlcolor=cyan]{hyperref}

\usepackage{tikz}
\usetikzlibrary{mindmap}
\usepackage{subfigure}

\usepackage{ulem}

\usepackage[ruled]{algorithm2e}

\newtheorem{theor}{Theorem}[section]
\newtheorem*{theorem*}{Theorem}

\newtheorem{lemma}[theor]{Lemma}
\newtheorem*{lemma*}{Lemma}

\newtheorem{defi}[theor]{Definition}

\theoremstyle{definition}
\newtheorem{rem}[theor]{Remark}

\theoremstyle{plain}

\newcommand{\R}{\mathbb{R}}

\def\Prob{{\mathbb P}}

\usepackage{xcolor}
\definecolor{b}{HTML}{4472c4}
\definecolor{o}{HTML}{ED7D31}
\definecolor{g}{HTML}{70ad47}
\definecolor{t}{RGB}{40,154,150}

\def\R{{\mathbb R}}

\def\Prob{{\mathbb P}}
\def\Exp{{\mathbb E}}

\def\Vol{{\rm Vol}}

\newcommand{\norm}[1]{\left\| #1 \right\|}

\newcommand{\step}[1]{\medskip  \noindent \underline{#1.\,}}

\newcommand{\Qs}{  Q_1^\circ}
\newcommand{\Cu}{\mathcal{C}_+}
\newcommand{\Cm}{\mathcal{C}_-}

\date{\today}
\title{Hardness of approximation of centered convex bodies by polytopes}

\author{Han Huang and Mark Rudelson}

\address{Department of Mathematics, University of Missouri}
\email[H.~Huang]{hhuang@missouri.edu}

\address{Department of Mathematics, University of Michigan}
\email[M.~Rudelson]{rudelson@umich.edu}

\begin{document}
\begin{abstract}
    The distance between convex bodies $K, L \subseteq \R^n$ is defined as 
\[
  d(K,L)= \inf \left\{ \lambda \ge 1: \  L-x \subseteq T (K-y) \subseteq \lambda (L-x) \right\},
\]
where the infimum is taken over all $x,y \in \R^n$ and all invertible linear operators $T: \R^n \to \R^n$.
If both bodies are centrally symmetric, then the shifts $x$ and $y$ can be chosen to be $0$. In this case, any convex symmetric body $K$ can be approximated by a polytope $P$ with at most $N \in (n, e^{cn})$ vertices so that 
\[
 P \subseteq K \subseteq \lambda P
\]
where $\lambda= O \left(\sqrt{\frac{n}{\log N}} \right)$ up to logarithmic factors.

We prove that approximating a general centered convex body by a polytope requires a significantly larger number of vertices compared to the symmetric case.
More precisely, there exists a convex body $K \subseteq \R^n$ whose barycenter coincides with the origin, such that any polytope $P$ satisfying
\[
 P \subseteq K \subseteq c \, \frac{n}{\log N} P
\]
must have at least $N$ vertices, provided that $N \in (Cn^2, e^{cn})$.

Moreover, we prove that the same bound holds for approximating a centered convex body with a polytope having $N$ facets instead of $N$ vertices.
\end{abstract}

\maketitle

\section{Introduction}

The study of the Banach-Mazur distance between convex symmetric bodies in $\R^n$ is among the central topics in geometric functional analysis due to  such bodies being the  unit balls of normed spaces; see, e.g., \cite{Pisier1989, T-J1989}. 
The distance between convex symmetric bodies $K$ and $L$ is defined as
\[
  d(K,L)= \inf_{T \in Gl(n)} \{ \lambda \ge 1: \ L \subseteq TK \subseteq \lambda L \},
\]
where the infimum is taken over all invertible linear mappings $T$. 
The breadth of the topic does not allow us to give a concise review, so we will mention only two directions: the maximal distance and the approximation of such bodies by polytopes with  few vertices or facets.
A classical theorem of John \cite{Pisier1989} asserts that for any convex symmetric bodies $K,L \subseteq \R^n$, $d(K,B_2^n), d(L,B_2^n) \le \sqrt{n}$ which implies that 
\[
  d(K,L) \le n.
\]
In 1988, Gluskin proved that this estimate is essentially optimal by constructing two random convex symmetric polytopes with distance at least $cn$ \cite{Gluskin1988}. 
Here and below, $C,c$, etc. stand for absolute constants. 

The problem of approximating convex symmetric bodies by polytopes with few vertices or facets was resolved in the last 15 years. 
In addition to its theoretical value, this problem is important because of its applications in computer science \cite{B-V2008}. 
Barvinok \cite{Barvinok2014} proved that for any $N >Cn$ and for a given convex symmetric body $K \subseteq \R^n$, there exists a symmetric polytope $P_N \subseteq \R^n$ with $N$ vertices such that 
\begin{equation}  \label{eq: upper symm}
  d(K,P_N) \le C \max \left( 1, \sqrt{\frac{n}{\log N} \log \frac{n}{\log N}} \right).  
\end{equation}

This bound is almost optimal even for the Euclidean ball $K=B_2^n$. Namely, if $N \ge cn$, then \cite{BLM89, C-P1988, Gluskin-par1988}
\[
  d(B_2^n, P_N) \ge c \sqrt{\frac{n}{\log (N/n)}}. 
\]

Similar questions for general convex bodies turned out to be significantly more difficult.
As the origin plays no special role in the non-symmetric setting, we  allow shifts in the definition of the distance between general convex bodies.
More precisely, for  $K,L \subseteq \R^n$ we set
\[
  d(K,L)= \inf_{x,y \in \R^n, \ T \in Gl(n)} \left\{ \lambda \ge 1: \  L-x \subseteq T (K-y) \subseteq \lambda (L-x) \right\}.
\]
John's theorem holds for general convex bodies as well, albeit with a different bound. That is, if $K,L \subseteq \R^n$ are convex bodies, then $d(K,B_2^n), d(L,B_2^n) \le n$, which yields $d(K,L) \le n^2$. 
However, unlike  in the symmetric setting, this bound is suboptimal.
A breakthrough paper \cite{Bizeul-Klartag2025} by Bizeul and Klartag proved that 
\[
  d(K,L) \le C n \log^a n
\]
for some absolute constant $a>0$, which greatly improved the previously known bound $O(n^{4/3} \log^a n)$ \cite{R2000}.
This result shows that the maximal distance between general convex bodies is roughly the same as between symmetric ones. 
Moreover, to evaluate the distance between $K$ and $L$, it is enough to put both bodies in the isotropic position. This, in particular, means that the shifts $x$ and $y$ can be chosen as the barycenters of  $K$ and $L$ respectively.

The problem of approximating a convex body by a polytope with few vertices remains open.
In what follows, we concentrate on the coarse approximation when the number of vertices of the approximating polytope is at most exponential in dimension. For the fine regime $P_N \subseteq K \subseteq (1+\varepsilon)P_N$ that requires at least an exponential number of vertices, see \cite{NNB2020}.
If the body $K$ is shifted so that its barycenter coincides with the origin, then for any $N$ such that $Cn \le N \le e^{cn}$, there exists a polytope $P_N$ with $N$ vertices satisfying
\begin{equation}  \label{eq: BCH inclusion}
    P_N \subseteq K \subseteq C \frac{n}{\log N} P_N,
\end{equation}
and so 
\begin{equation}  \label{eq: upper non-sym}
     d(K,P_N) \le C \frac{n}{\log N}.
\end{equation}
This bound was obtained by Brazitikos et. al. \cite{BCH17} by constructing $P_N$ as the convex hull of $N$ independent random vectors uniformly distributed in $K$.
The probability estimate was later improved in \cite{Nas19}.
The same bound on the distance was also obtained by Szarek \cite{Szarek2014} who used the greedy algorithm to construct $P_N$ and a different shift.

Comparison between \eqref{eq: upper symm} and \eqref{eq: upper non-sym} reveals a large gap between the known bounds in the symmetric and general case. Moreover, it is unknown whether one can construct a polytope $P$ with a polynomial in $n$ number of vertices such that $d(K,P) =O(n^{1-a})$ for some absolute constant $a>0$.
Obtaining a tight estimate for the distance to a polytope requires choosing appropriate shifts, which is a major factor contributing to the difficulty of the problem compared to the symmetric setting.

The only known result in the negative direction belongs to the first author, who showed that if $c \sqrt{n} \le R \le c'n$ then  there exists a convex body $K \subseteq \R^n$ whose John's ellipsoid is centered at the origin such that any polytope satisfying
\[
 K \subseteq P \subseteq RK
\]
must have at least $\exp \left(C \log \big( \frac{R^2}{n}  \big) \frac{n^2}{R^2} \right)$ vertices \cite{Huang2019}. 
On the other hand, as \cite{Huang2018} shows, the center of the John ellipsoid and the barycenter can be extremely far apart. 
Namely, there exists a convex body $K \subseteq \R^n$ whose John's ellipsoid $\mathcal{E} \subseteq K$ is centered at the origin and whose barycenter is not contained in $(1-o_n(1))n \mathcal{E}$, while $K \subseteq n \mathcal{E}$ from John's theorem.

Since by a theorem of Bizeul and Klartag, centering bodies at their barycenters allows obtaining an approximately optimal bound for the maximal distance, it is of interest to check whether the barycenter would provide a good center for polytope approximation as well.
This question was the main motivation for this paper. 
Theorem \ref{thm: main} gives a strong negative answer to this question. 
Moreover, it shows that one cannot obtain a better approximation with another classical center, namely the Santal\'{o} point.
For a convex body $K \subseteq \R^n$ containing the origin in its interior, define the polar body  as
\[
  K^\circ= \{y \in \R^n: \ \langle y,z \rangle \le 1 \text{ for all } z \in K \}.
\]
The Santal\'{o} point of $K$ is defined as the unique point $x \in K$ at which the function
\[ 
  \Vol(K-x) \cdot \Vol \left( (K-x)^\circ \right)
\]
attains its minimal value; see \cite{Schneider1993}. 
If the body $K$ is shifted to put its Santal\'{o} point at the origin, the body $K^\circ$ is centered, i.e., its barycenter is also at the origin.
This makes the Santal\'{o} point another natural candidate for the center of approximation.

As our main result shows, centering the body neither at the barycenter nor at the Santal\'{o} point can yield a better approximation compared to \eqref{eq: upper non-sym}.
More precisely, we have the following theorem.

\begin{theor} \label{thm: main} 
    Let $n$ be sufficiently large, and assume that $N$ satisfies
    \begin{equation} \label{eq: l_range_main}
        C_{\ref{thm: main}} n^2 \le N \le \exp\left(\frac{n}{C_{\ref{thm: main}}}\right).  
    \end{equation}
    Then there exists a convex body $K \subseteq \R^n$ with the following property: \\   
    for any vector $\xi \in \R^n$ lying on the line segment connecting the barycenter of $K$ and its Santaló point, any polytope $P$ satisfying
    \begin{equation*}
        P \subseteq K -\xi \subseteq \frac{n}{C_{\ref{thm: main}}\log N}P
    \end{equation*}
    must have at least $N$ vertices.
\end{theor}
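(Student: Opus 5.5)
Fix $k\approx \log N$ and work in $\R^n=\R^{n-k}\oplus\R^k$ (or a similar splitting), taking $K$ to be a body whose barycenter and Santal\'o point both lie very close to a distinguished ``apex'' region. The approach is to combine a cone-like construction, which forces any approximating polytope to place its center near the apex, with a random polytope piece in which the center is deep inside and which needs many vertices.

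\step{Construction} Let $K$ be a ``tilted cone'' over a symmetric random body $L\subseteq\R^{k'}$ of Gluskin type. A concrete candidate is the convex hull of a point $-e_1$ and a base $\{1\}\times L$, where $L$ is, say, a random symmetric polytope (or the polar of one) in dimension $m\approx n$. In a cone of dimension $n$ the barycenter sits at relative height $\approx 1-1/n$, i.e.\ at distance $\sim 1/n$ from the base. The Santal\'o point also lies at distance $\Theta(1/n)$ from the base, since the polar of a cone is again a cone and the same computation applies. I will verify both by explicit one-dimensional integration of sections $t^{m}$. Hence every $\xi$ on the segment satisfies $\xi=(1-\theta/n)e_1$ with $\theta\in[c,C]$.

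\step{Reduction} Suppose $P\subseteq K-\xi\subseteq \lambda P$ with $\lambda=n/(C\log N)$. Take the section of $K-\xi$ through the origin parallel to the base. It is a copy of $(1-\theta'/n)L$, which is essentially $L$. Since $\xi$ is $\sim 1/n$ from the base facet, $\lambda P$ must reach the base, so $P$ reaches within distance $\sim 1/(n\lambda)\cdot n=\log N/n$ of the far side. More usefully, consider the radial function towards the base. Let $h$ be the height of $K-\xi$ above the origin; $h\approx \theta/n$. For every point $z$ in the base, the vertices of $P$ must cover $z/\lambda$. Projecting from the apex onto the base hyperplane, a vertex of $P$ at height $s$ covers only a shrunken region. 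Since $P\subseteq K-\xi$ lies below height $h\sim1/n$, convexity gives that $\mathrm{proj}(P)$ contains $L/(\lambda')$ with $\lambda'\lesssim \lambda\cdot h\cdot n/ \theta$. I expect this to give a symmetric-type approximation of (a section of) $L$ by a polytope with $\le N$ vertices and ratio $\lambda'$.

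\step{Lower bound, main obstacle} The difficulty is that the heuristic above loses: the ratio $\lambda'$ I get is still of order $n/\log N$, while for symmetric bodies the approximation ratio is $\sqrt{n/\log N}$. So the lower bound must come from non-symmetry. I would instead make $L$ non-symmetric in a controlled way, namely a simplex-like piece for which approximation with ratio $r$ requires vertices near $\gtrsim$ each of many directions. The plan is to choose $L$ as the polar of a random polytope $\mathrm{conv}(\pm g_i)$ with $M=N^2$ Gaussian points, and to show that with high probability any polytope with $N$ vertices inside $K-\xi$ misses some $z/\lambda$. For a fixed configuration this is a union bound over nets of $N$-tuples of vertices ($e^{Cn N\log n}$ choices, which requires $N\ge Cn^2$ to beat the $e^{-cM}$-type failure probability per configuration). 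That accounts for the hypothesis $N\ge Cn^2$, while $N\le e^{n/C}$ keeps $\lambda\ge C$. The technical core, and where I expect the real work, is the single-configuration estimate: for fixed vertices $v_1,\dots,v_N$ of height $\le 1/n$, the probability that a random facet direction $g$ has $\max_j\langle v_j,g\rangle\ge \|g\|/\lambda$-type coverage is at most $N^{-2}$. This follows from Gaussian tails once heights force $\langle v_j,g\rangle$ to have variance $\lesssim (\log N)^{-1}\lambda^{-2}$-scaled.

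The dual facet statement follows by polarity.
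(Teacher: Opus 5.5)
\textbf{There is a genuine gap, and it lies in the construction itself, not only in the unproved ``single-configuration estimate''.}

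In your cone $K=\mathrm{conv}(-e_1,\{1\}\times L)$ the center $\xi=(1-\theta/n)e_1$ sits at distance $\sim 1/n$ from the base. The only part of $K$ far from $\xi$ is a single apex, which costs one vertex. Your own reduction step points to the consequence. Write $K-\xi=\mathrm{conv}\big(-(2-\theta/n)e_1,\ (\theta/n)e_1+L\big)$. If $P_L\subseteq L\subseteq (\lambda-2)P_L$, then $P=\mathrm{conv}\big(-(2-\theta/n)e_1,\ (\theta/n)e_1+P_L\big)$ satisfies $P\subseteq K-\xi\subseteq\lambda P$; compare the cross-sections at each height $s\in[-(2-\theta/n),\theta/n]$. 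So approximating $K-\xi$ is no harder than approximating $L$ with ratio about $\lambda$.

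Your concrete choice, $L$ the polar of $\mathrm{conv}(\pm g_i)$, is symmetric. Barvinok's bound \eqref{eq: upper symm} therefore produces such a $P_L$ with far fewer than $N$ vertices. For example, for $\log N\approx\sqrt n$ (which is in the admissible range) and $\lambda=\sqrt n/C$, polynomially many vertices suffice. Hence the net/union-bound argument you sketch cannot succeed: the statement it is meant to prove is false for this body. Your alternative, a non-symmetric ``simplex-like'' $L$, just restates the original problem one dimension lower, so the cone adds nothing.

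(Separately, the facet version does not follow by polarity. Polarity does not commute with translation, so it moves the centers.)

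\textbf{The missing idea is to reverse the roles of the near and far parts of the body.} In the paper the facet near the center is large and harmless. $\Qs=Q^\circ\cap B_2^n$ has nearly the volume of $B_2^n$. The opposite facet $e_0+Q$, with $Q=\mathrm{conv}\{\pm x_i\}$ built from $m=nN$ quasi-orthogonal vectors, has Gaussian mean width at most $0.05\,w_G(B_2^n)$. Cross-section volume bounds (via Urysohn) and Gr\"unbaum's inequality, applied to $K$ and to its polar, then place both the barycenter and the Santal\'o point at height $O(1/n)$.

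The hard symmetric Gluskin polytope thus sits at distance $\sim 1$ from the center, while the body extends only $\sim 1/n$ on the other side. This asymmetry is exactly what upgrades $\sqrt{n/\log N}$ to $n/\log N$.

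The lower bound is then deterministic. The tilted functionals $x_i^-=-\frac{2\sqrt n}{\Delta}e_0+x_i$ satisfy:
\begin{itemize}
\item they are $\asymp n/\Delta^2$ on $x_i^+=(1-\eta)e_0+x_i$;
\item they are nonpositive on the other top vertices, by quasi-orthogonality;
\item they are at most $3$ on the bottom facet, because $\eta\le 1/\sqrt n$ and $\Qs\subseteq Q^\circ$.
\end{itemize}
Write each vertex of $P$ as a convex combination of points of the two facets. A vertex can reach level $12$ on $x_i^-$ only if it puts weight at least $9\Delta^2/(4n)$ on $x_i^+$. So each vertex serves at most $4n/(9\Delta^2)$ indices, which forces at least $m/n=N$ vertices once $\lambda\le n/(96\Delta^2)$. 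No nets or union bounds over vertex configurations are needed.
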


In the case where the body is centered, i.e., its barycenter is the origin, the lower bound for the number of vertices matches \eqref{eq: BCH inclusion} for any $N$ satisfying \eqref{eq: l_range_main}.
This leads to two possibilities, each of which would have interesting implications. 
The first is that the estimate \eqref{eq: upper non-sym} is the best possible with any choice of center, meaning that the approximation of general convex bodies by polytopes requires many more vertices compared to symmetric ones.
Alternatively, it is possible that unlike in \cite{Bizeul-Klartag2025}, the choice of the barycenter or the Santal\'{o} point as the origin is not optimal, and there exists a new, currently unknown center that yields a better approximation.

  A well-known problem in computer science asks about approximation of a convex body by a polytope with a few facets, see, e.g., \cite{B-V2008}.
  This setup arises naturally since convex bodies are feasible sets of convex programming problems, and a polytope in $R^n$ is canonically represented as a set of solutions of a system of linear inequalities
  \begin{align*}
      A x &= b, \\
      x &\ge 0,
  \end{align*}
  where $x \in \R^n, \ b \in \R^N$, and $A$ is an $n \times N$ matrix.
  In this case, $N$ defines the complexity of the problem, and it is desirable to minimize it while maintaining the required precision of the approximation. 

  Our second result, Theorem \ref{thm: main-dual}, provides the same lower estimate for the distance of a centered convex body to a polytope with few facets as Theorem \ref{thm: main} yielded for approximation by a polytope with few vertices.
  It cannot be derived from Theorem \ref{thm: main} by taking polars. 
  Indeed, if we choose a body $K$ constructed in Theorem \ref{thm: main} and select $\xi_1$ as its Santal\'{o} point, then $(K-\xi_1)^{\circ}$ would be an example for Theorem \ref{thm: main-dual} and would have the barycenter at the origin. At the same time, if we use the same body $K$ and choose $\xi_2$ as its barycenter, then $(K-\xi_2)^\circ$ will have the Santal\'{o} point at the origin. 
  However, $(K-\xi_1)^{\circ}$ and $(K-\xi_2)^{\circ}$ are not shifts of each other, since taking the polar does not commute with the shift.

  Nevertheless, we can prove the following theorem.

\begin{theor} \label{thm: main-dual}
    Let $n$ be sufficiently large, and assume that $N$ satisfies
    \begin{equation} \label{eq: l_range_main dual}
        C_{\ref{thm: main}} n^2 \le N \le \exp\left(\frac{n}{C_{\ref{thm: main}}}\right).  
    \end{equation}
    Then there exists a convex body $K \subseteq \R^n$ with the following property: \\   
    for any vector $\xi' \in \R^n$ lying on the line segment connecting the barycenter of $K$ and its Santaló point, any polytope $P$ satisfying
    \begin{equation*}
        P \subseteq K -\xi' \subseteq \frac{n}{C_{\ref{thm: main}}\log N}P
    \end{equation*}
    must have at least $N$ facets.
\end{theor}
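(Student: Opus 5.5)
The plan is to reduce Theorem \ref{thm: main-dual} to the vertex statement by polarity, reusing the construction behind Theorem \ref{thm: main} but arranging the shifts so that the polar body is the relevant one. If $P \subseteq K-\xi' \subseteq \lambda P$ with $P$ a polytope having $N$ facets and $0$ in its interior, then taking polars gives
\[
 (K-\xi')^\circ \subseteq P^\circ \subseteq \lambda (K-\xi')^\circ .
\]
Here $P^\circ$ is a polytope with $N$ vertices, and $\lambda^{-1}P^\circ \subseteq (K-\xi')^\circ \subseteq P^\circ$, so the inclusions are of exactly the form in Theorem \ref{thm: main}, with the same $\lambda$. So it suffices to find $K$ such that, for every $\xi'$ on the segment $[\mathrm{bar}(K), s(K)]$, the body $L_{\xi'} = (K-\xi')^\circ$ admits no $N$-vertex polytope $Q$ with $Q \subseteq L_{\xi'} \subseteq \lambda Q$, where $\lambda = n/(C\log N)$.

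The difficulty noted in the paper is that $L_{\xi'}$ is not a shift of a single fixed body, so Theorem \ref{thm: main} cannot be quoted as a black box. Instead I would rerun its proof and isolate the robust property it actually uses. I expect that construction to be a random body, for example a convex hull of a random polytope with a long "spike" direction, built so that the barycenter and the Santal\'o point sit in a controlled region. The lower bound should then come from a statement like the following: for every center $z$ in an explicit neighbourhood $U$, every $N$-vertex polytope $Q \subseteq K - z$ misses a point of $\lambda^{-1}(K-z)$. This would be proved by a union bound over a net of candidate vertex sets together with a volumetric or Gaussian small-ball estimate; the range $N \ge Cn^2$ and $N \le e^{cn}$ enters through the net size and the $\log N$ scaling.

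For the facet version I would run the dual construction directly, defining $K$ as the polar, about a suitable point, of the body from Theorem \ref{thm: main}. The key steps are the following.
\begin{enumerate}
\item Compute where the barycenter and the Santal\'o point of $K$ lie, using the polar relation between the Santal\'o point of one body and the barycenter of its polar.
\item Show that for every $\xi'$ on the segment, $(K-\xi')^\circ$ is, up to a bounded multiplicative distortion, equal to $(K_0 - z)$ for some $z \in U$. Polarity with respect to nearby centers changes the body only by a constant-factor inclusion, as long as the centers stay deep inside the body; precisely, if $z$ lies in $\frac12$ of a body around $0$, then $(K-z)^\circ$ and $K^\circ$ are comparable up to factor $2$ after a shift.
\item Apply the robust lower bound, absorbing the constant distortion into $C_{\ref{thm: main}}$.
\end{enumerate}

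The main obstacle is step 2. The segment between the barycenter and the Santal\'o point may be long relative to the body in the spike direction, and there polarity is not a bounded perturbation. My first attempt would be to tune the construction so that both points lie within a small multiple of the inradius about a common point, perhaps by adding a symmetric Euclidean component to $K$. I would then verify that this modification preserves the hardness estimate, since adding a ball of radius comparable to $(\log N/n)$ times the scale does not help approximation by the required factor.
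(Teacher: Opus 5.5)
\textbf{There is a genuine gap: step 2 is the part of the proof specific to the facet theorem, and it is not established.} Your polarity reduction matches the paper. So does your choice of $K$ as the polar of the centered body from Theorem \ref{thm: main}: the paper takes $\widetilde K=K(\eta_g)^\circ$, so that $s(\widetilde K)=0$ and $g(\widetilde K)=\gamma_g e_0$. What remains is to control $(\widetilde K-\xi')^\circ$ uniformly for $\xi'$ on the segment, and that is your step 2.

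The general principle you invoke there is false for non-symmetric bodies. From $z\in\frac12K$ you only get $\frac12K\subseteq K-z$, i.e. $(K-z)^\circ\subseteq 2K^\circ$. In the other direction, $(K-z)^\circ$ is a projective, not affine, image of $K^\circ$, with radial distortion $h_K(\theta)/(h_K(\theta)-\langle\theta,z\rangle)$. This can be of order $1/n$ when $h_K(-\theta)\approx n\,h_K(\theta)$. For example, for a simplex with barycenter $0$ and $z=\frac12 v_0$, the body $(K-z)^\circ$ is a simplex flattened by a factor of order $n$. It is not within a constant factor of any translate of a dilate of $K^\circ$. Your fallback, adding a Euclidean component to bring the two centers together, comes with no argument that it preserves either the location of the centers or the hardness estimate. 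Also, Theorem \ref{thm: main} is not proved by a net or union-bound argument; once the quasi-orthogonal system is fixed, the argument is deterministic.

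\textbf{The missing idea} is that, for this particular $K$, the polar of a shift stays in an explicit family on which the hardness argument is uniform.
\begin{itemize}
\item $K(\eta)^\circ=\mathcal{C}_+\cap\mathcal{C}_-$ is an intersection of two cones with apexes $\frac{1}{1-\eta}e_0$ and $-\frac1\eta e_0$. A shift by $-he_0$ just dilates each cone: $\mathcal{C}_+-he_0=(1-(1-\eta)h)\,\mathcal{C}_+$ and $\mathcal{C}_- -he_0=(1+\eta h)\,\mathcal{C}_-$.
\item This gives Lemma \ref{lem:polar_shift_identity}: $(K(\eta)^\circ-he_0)^\circ=\frac{1}{1-(1-\eta)h}K(\eta,\kappa)$ with $\kappa=\frac{1-(1-\eta)h}{1+\eta h}$. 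This is the same body with its bottom facet dilated by $\kappa$.
\item Lemma \ref{lem: hardness} is proved for this whole family with loss only $\max\{\kappa,1\}$. The bottom facet enters each test functional $x^-_{i,+1}$ only through $\kappa\big(\eta\frac{2\sqrt n}{\Delta}+\langle y,x_i\rangle\big)\le 3\kappa$, using $Q_1^\circ\subseteq Q^\circ$ and $\eta\le 1/\sqrt n$. In particular, for $h\ge 0$ one has $\kappa\le 1$ and no lower bound on $\kappa$ is needed. A pure comparability argument like yours would need one.
\item $\kappa$ is bounded along the segment. Gr\"unbaum's inequality for $\widetilde K$, combined with the sandwich $\mathcal{C}_-'\subseteq\widetilde K\subseteq\mathcal{C}_-$ from Lemma \ref{lem:Cmprime_dominance}, gives $\gamma_g\ge-10\log(2e)$ (Lemma \ref{lem:gamma_g_bound}). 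Hence $1+\eta_g h\ge\frac12$ and $\kappa\le 2(1+10\log(2e))$ for $h\in[\gamma_g,0]$.
\end{itemize}
Your plan contains none of these three ingredients: the shift identity, hardness uniform in $\kappa$, and the bound on $\gamma_g$. Without some version of them, step 2, and with it the theorem, remains unproved.
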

 As before, this bound matches the one obtained in \cite{Szarek2014}, albeit with a different center.

\section{Basic definitions and standard tools}
The volume of a convex body $L \subseteq \R^n$ will be denoted by $|L|$. 
The expectation of a random variable $Y$ is denoted by $\Exp Y$ and the probability of an event $V$ by $\Prob (V)$.
We denote the standard Euclidean norm by $\norm{\cdot}$ and its unit ball by $B_2^n$.

We collect several standard notions which will be  repeatedly used below; see, e.g.
\cite{Vershynin2018HDP,Ball1997,Grunbaum1960,Pisier1989} for background.
\begin{defi}[Support function]
    For a convex body $L \subseteq \R^n$, its support function is defined as
    \[
        h_L(y) := \sup_{x\in L} \langle x,y\rangle, \qquad y\in\R^n\,.
    \] 
\end{defi}
The support function of $L$ can be used to define its polar body:
    \[
        L^\circ
            = \{y\in\R^n:h_L(y)\le 1\}.
    \]

\begin{defi}[Gaussian mean width]\label{def:gaussian_mean_width}
 Define the Gaussian mean width of a compact set $L\subseteq\R^n$ as
\[
    w_G(L):=\Exp \sup_{x\in L}\langle x,G\rangle,
\]
where $G$ stands for the standard Gaussian vector in $\R^n$.
\end{defi}
The Gaussian mean width is linear with respect to the Minkowski addition. 


We will use a special system of quasi-orthogonal vectors whose existence is stated in the next lemma.
\begin{lemma}[Existence of a design]
    \label{lem: xi-configure}
    There exists an absolute constant $C_{\ref{lem: xi-configure}}>0$ such that for every sufficiently large $n$ and $2 \le m \le \exp\Big( \tfrac{n}{C_{\ref{lem: xi-configure}}}\Big)$, there exist vectors $y_1, \dots, y_m \in \R^n$ with the following properties:
    \begin{enumerate}
        \item  For all $i \in [m]$,
        \[
            \frac{1}{2}\sqrt{n} \le \norm{y_i} \le 2\sqrt{n}.
        \]
        \item For all distinct pairs $\{i,j\} \in \binom{[m]}{2}$,
        \[
            |\langle y_i, y_j \rangle| \le C_{\ref{lem: xi-configure}} \sqrt{n \log m}.
        \]
    \end{enumerate}
\end{lemma}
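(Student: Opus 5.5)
We will prove Lemma \ref{lem: xi-configure} by the probabilistic method, taking $y_1,\dots,y_m$ to be independent standard Gaussian vectors in $\R^n$. The plan is to show that each of the two required properties fails with small probability, and then conclude by a union bound that a realization satisfying both exists.

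\step{Norms} For property (1), $\norm{y_i}^2$ is a $\chi^2$ variable with $n$ degrees of freedom. Standard concentration (Bernstein's inequality for sub-exponential variables, or Gaussian concentration of the Lipschitz function $\norm{\cdot}$ around $\sqrt n$) gives
\[
\Prob\left( \norm{y_i} \notin \left[\tfrac12\sqrt n, 2\sqrt n\right] \right) \le 2e^{-c_0 n}
\]
for an absolute constant $c_0>0$. A union bound over $i \in [m]$ bounds the failure probability of (1) by $2m e^{-c_0 n}$. This is at most $1/4$ provided $m \le \exp(n/C)$ with $C$ large enough and $n$ sufficiently large.

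\step{Inner products} For property (2), fix $i\ne j$ and condition on $y_j$. Then $\langle y_i,y_j\rangle$ is distributed as $N(0,\norm{y_j}^2)$. On the event $\norm{y_j}\le 2\sqrt n$ we therefore get
\[
\Prob\left( |\langle y_i,y_j\rangle| > t \right) \le 2\exp\!\left(-\frac{t^2}{8n}\right).
\]
Choose $t = C\sqrt{n\log m}$. A union bound over at most $m^2$ pairs gives failure probability at most
\[
2m^2 \exp\!\left(-\frac{C^2\log m}{8}\right) = 2m^{2-C^2/8},
\]
which is at most $1/4$ for $m\ge 2$ and $C$ large. Alternatively, one can avoid conditioning: $\langle y_i,y_j\rangle$ is a product of Gaussians, hence sub-exponential, and Bernstein's inequality gives the tail
\[
2\exp\!\left(-c\min\left(\frac{t^2}{n},\, t\right)\right).
\]
For $t=C\sqrt{n\log m}$ we have $t^2/n = C^2\log m$ and, since $\log m \le n$, also $t \ge C\log m$. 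So the exponent is at least $cC\log m$ either way.

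\step{Conclusion} Adding the failure probabilities of the two steps (the first step also covers the event $\norm{y_j}>2\sqrt n$ used in the conditioning) gives total failure probability below $1$. Hence some realization of $y_1,\dots,y_m$ satisfies both (1) and (2).

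The only point needing care is the inner-product tail. In the regime $t\gg n$ the tail is sub-exponential rather than Gaussian, and this is handled either by conditioning on $\norm{y_j}$ or by the condition $\log m\le n/C$. Otherwise the argument is routine.
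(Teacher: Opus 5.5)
Your proof is correct and follows the paper's route. You take independent standard Gaussian vectors, use norm concentration with a union bound (this is where $m\le \exp(n/C)$ enters), and condition on one vector so that $\langle y_i,y_j\rangle$ is centered Gaussian with variance $\norm{y_j}^2\le 4n$, followed by a union bound over pairs. Your bookkeeping — intersecting each pairwise tail with the norm event and adding the failure probabilities once — is slightly cleaner than the paper's, and the Bernstein variant is valid but not needed.
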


The proof shows that independent standard Gaussian vectors satisfy these properties with high probability. The argument relies on the standard Gaussian concentration inequality combined with a union bound. We include the details in the Appendix.


The following basic estimate of one-dimensional marginals will be frequently used  later.
\begin{lemma}[Maximal one-dimensional projections]
    \label{lem:max_projection}
    Let $v_1,\dots,v_m \in S^{n-1}$ and let $X$ be a random vector distributed according to any of the measures:
    \[
        N(0,I_n), \quad {\rm unif}(\sqrt{n}S^{n-1}), \quad \text{or} \quad {\rm unif}(\sqrt{n}B_2^n).
    \]
    Then
    \[
        \Prob\left( \max_{i\in[m]} |\langle v_i, X \rangle| \ge C_{\ref{lem:max_projection}} \sqrt{\log m}\right) \le m^{-10}.
    \]
\end{lemma}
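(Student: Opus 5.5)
We argue by reducing to one direction at a time and then applying a union bound over the $m$ directions $v_i$. Fix $v \in S^{n-1}$. In each of the three cases the marginal $\langle v, X\rangle$ is a centered random variable with sub-Gaussian tails, uniformly in $v$ and $n$. That is, there is an absolute constant $c>0$ with
\[
\Prob\left( |\langle v, X\rangle| \ge t\right) \le 2 e^{-c t^2} \qquad \text{for all } t \ge 0 .
\]
Once this is known, the union bound gives
\[
\Prob\left( \max_{i\in[m]} |\langle v_i, X\rangle| \ge t\right) \le 2m e^{-ct^2}.
\]
Choosing $t = C\sqrt{\log m}$ with $C$ large (say $cC^2 \ge 12$) makes this at most $2m\cdot m^{-12} \le m^{-10}$. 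For $m \ge 2$ we have $\log m \ge \log 2 > 0$, so absorbing the factor $2$ causes no issue.

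The three one-dimensional tail estimates are handled separately.
\begin{enumerate}
\item For $X \sim N(0,I_n)$, the marginal $\langle v,X\rangle$ is exactly $N(0,1)$ by rotation invariance, and $\Prob(|g|\ge t)\le 2e^{-t^2/2}$.
\item For $X$ uniform on $\sqrt{n}S^{n-1}$, we use the concentration of measure on the sphere, in the form of Lévy's inequality applied to the $1$-Lipschitz function $x\mapsto \langle v,x\rangle$ on $\sqrt{n}S^{n-1}$, which has median $0$. This gives $\Prob(|\langle v,X\rangle|\ge t)\le 2e^{-ct^2}$.

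Alternatively, one can compute directly. The density of $\langle v, X\rangle/\sqrt n$ is proportional to $(1-s^2)^{(n-3)/2}$, and $(1-s^2)^{(n-3)/2}\le e^{-(n-3)s^2/2}$. Comparing with the normalizing constant, which is of order $\sqrt n$, yields the bound for $n\ge 4$.
\item For $X$ uniform on $\sqrt n B_2^n$, write $X = R\,\theta$ with $\theta$ uniform on $\sqrt n S^{n-1}$ and $R\in[0,1]$ independent of $\theta$. Then $|\langle v,X\rangle| \le |\langle v,\theta\rangle|$, so the tail bound from case (2) transfers directly.
\end{enumerate}
Small $n$, where the constants in (2) might degrade, can be absorbed by enlarging $C_{\ref{lem:max_projection}}$. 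This works because $|\langle v, X\rangle| \le \sqrt n$ always holds in cases (2) and (3), and in the Gaussian case no restriction on $n$ is needed at all.

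The only step that is not purely routine is the uniform sub-Gaussian bound for the spherical marginal. We will quote the standard spherical concentration inequality (see \cite{Vershynin2018HDP}) rather than reprove it. Everything else is a union bound and a choice of constants.
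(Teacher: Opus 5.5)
Your argument is correct, and its skeleton is the paper's. You bound the tail in one direction and take a union bound over the $m$ directions. In the Gaussian case you use the exact $N(0,1)$ marginal. You reduce the ball to the sphere via the pointwise bound $|\langle v, X\rangle| \le |\langle v, \theta\rangle|$; the paper writes $X = RU$ with $R \le \sqrt{n}$, which is the same thing.

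The only genuine divergence is the spherical case, which you correctly identify as the one non-routine step.

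\emph{Your route.} You quote L\'evy's concentration inequality for the $1$-Lipschitz function $x \mapsto \langle v, x\rangle$ on $\sqrt{n}S^{n-1}$. Alternatively, you integrate the marginal density $\propto (1-s^2)^{(n-3)/2}$ against its normalizing constant of order $\sqrt{n}$.

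\emph{The paper's route.} The paper stays self-contained and uses only one-dimensional Gaussian facts. It writes $X = \sqrt{n}\,G/\norm{G}$. For $t < \sqrt{n}$ it rewrites the event $|\langle X, v\rangle| \ge t$ as $(n-t^2)g_1^2 \ge t^2(g_2^2+\dots+g_n^2)$. It conditions on $g_2,\dots,g_n$ to apply the scalar Gaussian tail bound. It then averages using $\Exp e^{-\alpha g^2} = (1+2\alpha)^{-1/2}$, which gives a bound of the form $2(1-t^2/n)^{(n-1)/2} \le 2e^{-t^2/4}$.

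\emph{Comparison.} The paper's route gives an explicit constant valid for every $n$, with no external theorem and no estimate of the normalizing constant. The case $t \ge \sqrt{n}$ is trivial there because the event is empty, exactly as in your small-$n$ remark. Your route is shorter, but it either imports spherical concentration as a black box or relies on the $\sqrt{n}$ normalizing-constant estimate, which you only sketch.

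One cosmetic point: your union bound $2m e^{-ct^2}$ needs $m \ge 2$. The case $m=1$, which you do not mention, is trivial because the right-hand side $m^{-10}$ equals $1$.
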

    The proof uses the standard concentration argument and the union bound. Constant 10 is chosen only for convenience; a similar bound holds with any exponent on $m$ if we adjust $C_{\ref{lem:max_projection}}$. We defer the proof of Lemma \ref{lem:max_projection} to the Appendix.

We will also need the classical Urysohn's inequality; see, e.g., \cite{Pisier1989}.
\begin{lemma} \label{lem:urysohn}
For any $n$-dimensional convex body $L\subseteq \R^n$,
\[
    \frac{|L|}{|B_2^n|}
\le 
    \left(\frac{w_G(L)}{w_G(B_2^n)} \right)^n\,,
\]
and equality holds when $L$ is a Euclidean ball.
\end{lemma}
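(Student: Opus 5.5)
The statement just before this point is Urysohn's inequality (Lemma \ref{lem:urysohn}). It is classical, so I would prove it by symmetrization rather than via the deep machinery used in the main theorems. The plan is to show that Steiner symmetrization, or more conveniently averaging over rotations, does not increase the Gaussian mean width while it preserves volume, and then to compare with a Euclidean ball of the same volume.

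\emph{Step 1 (basic properties of $w_G$).} The width $w_G$ is invariant under orthogonal maps, since $G$ is rotation invariant. It is also Minkowski linear, because $h_{L_1+L_2}=h_{L_1}+h_{L_2}$ and $w_G(L)=\Exp h_L(G)$. Finally it is positively homogeneous: $w_G(rB_2^n)=r\,w_G(B_2^n)$.

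\emph{Step 2 (symmetrization).} For a convex body $L$ and a unit vector $u$, let $S_uL$ be the Steiner symmetral of $L$ in direction $u$. Then $|S_uL|=|L|$ by Fubini. The key claim is $w_G(S_uL)\le w_G(L)$. Let $R_u$ be the reflection in $u^\perp$. Then
\[
S_uL\subseteq \tfrac12\bigl(L+R_uL\bigr),
\]
because the midpoint of a fiber of $L$ and of its reflection averages to height $0$, and the lengths average. Monotonicity of $h$, linearity, and Step 1 then give
\[
w_G(S_uL)\le \tfrac12\bigl(w_G(L)+w_G(R_uL)\bigr)=w_G(L).
\]

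\emph{Step 3 (passage to the ball).} It is standard that there is a sequence of Steiner symmetrizations $L_k$ converging in the Hausdorff metric to the ball $rB_2^n$ with $|rB_2^n|=|L|$. Along this sequence the volumes are all equal to $|L|$ and $w_G(L_k)\le w_G(L)$. Both volume and $w_G$ are continuous in the Hausdorff metric (the latter since $|h_{L}-h_{L'}|\le d_H(L,L')\,\norm{G}$). Hence $r\,w_G(B_2^n)=w_G(rB_2^n)\le w_G(L)$. Raising to the $n$-th power gives
\[
\frac{|L|}{|B_2^n|}=r^n\le\Bigl(\frac{w_G(L)}{w_G(B_2^n)}\Bigr)^n.
\]
Equality for balls is immediate by homogeneity.

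The only nontrivial point is the convergence of iterated Steiner symmetrizations to a ball. I would quote it from \cite{Schneider1993}. The inclusion in Step 2 is a short fiber-by-fiber check.
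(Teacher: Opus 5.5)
Your proof is correct, but it does not follow the paper, because the paper proves nothing here: it quotes Urysohn's inequality as classical and cites Pisier's book.

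Your steps check out:
\begin{itemize}
\item The key inclusion $S_uL\subseteq\frac12(L+R_uL)$ holds fiber by fiber. Over $x'\in u^\perp$ the fibers of $L$ and $R_uL$ are $[a,b]$ and $[-b,-a]$, and their midpoint set is $[-\frac{b-a}{2},\frac{b-a}{2}]$.
\item Together with volume preservation, Minkowski linearity and rotation invariance of $w_G$, and Hausdorff continuity, this inclusion gives the inequality.
\item You also need translation invariance, $w_G(L+x)=w_G(L)$ because $\Exp G=0$. It takes care of the center of the limiting ball and of equality for non-centered balls; it follows from your Step 1 but is worth stating.
\end{itemize}

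A common textbook proof is dual to yours. One forms rotation means $L_N=\frac1N\sum_{k\le N}U_kL$ with $U_k$ i.i.d.\ Haar-distributed orthogonal maps. Then $w_G(L_N)=w_G(L)$ exactly, Brunn--Minkowski gives $|L_N|\ge|L|$, and a uniform law of large numbers for support functions gives $L_N\to\frac{w_G(L)}{w_G(B_2^n)}B_2^n$.

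The trade-off between the two routes is as follows:
\begin{itemize}
\item The rotation-mean route keeps the width fixed and lets the volume grow. It needs Brunn--Minkowski, but its convergence step is an easy averaging argument.
\item Your route keeps the volume fixed and lets the width shrink. It needs only an elementary inclusion, but it uses the compactness-based theorem that iterated Steiner symmetrizations converge to a ball.
\end{itemize}
Either route suffices for how the paper uses the lemma, which is only as a black box.
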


\section{Approximation by a polytope with few vertices}
We equip $\R^{n+1}$ with the standard orthonormal basis $\{e_0, \dots, e_n\}$, identifying $\R^n$ with the span of $\{e_1, \dots, e_n\}$. Since Theorem~\ref{thm: main} is asymptotic with universal constants, we may construct our example in dimension $n+1$ while using $n$ instead of $n+1$ in  estimates.

We introduce several parameters that will be used in the remainder of the paper.
\begin{defi}[Parameter Configuration]
    \label{def: parameter_conf}
    Let 
    \begin{equation} \label{def: C_config}
        C_{\ref{def: C_config}} = \max \big(C_{\ref{lem: xi-configure}}, 100 C_{\ref{lem:max_projection}} \big),
    \end{equation}
    where $C_{\ref{lem: xi-configure}}, C_{\ref{lem:max_projection}}$ are the constants appearing in Lemmas \ref{lem: xi-configure}, \ref{lem:max_projection}.
    Assume  that the integers $m \ge n \ge 1$ satisfy
    \begin{equation} \label{eq:m_range}
        C_{\ref{def: C_config}} \, n \le m \le \exp\left(\frac{n}{C_{\ref{def: C_config}}}\right)
    \end{equation}
    and define the parameter
    \begin{equation} \label{eq:Delta_def}
        \Delta := C_{\ref{def: C_config}} \sqrt{\log m}.
    \end{equation}
    Furthermore, take vectors $y_1, \ldots y_m$ that satisfy Lemma \ref{lem: xi-configure}  and set $x_j= \frac{1}{\Delta} y_j$ for $j \in [m]$.
    By Lemma \ref{lem: xi-configure}, these vectors satisfy the following conditions:
       \begin{enumerate}
        \item \emph{Norm bounds:} For all $i \in [m]$,
        \begin{equation} \label{eq: xi_norm}
            \frac{1}{2}\frac{\sqrt{n}}{\Delta} \le \norm{x_i} \le 2\frac{\sqrt{n}}{\Delta}.
        \end{equation}
        \item \emph{Quasi-orthogonality:} For all distinct pairs $\{i,j\} \in \binom{[m]}{2}$,
        \begin{equation} \label{eq: xi_innerProduct}
            |\langle x_i, x_j \rangle| \le \frac{\sqrt{n}}{\Delta}.
        \end{equation}
        We will call the system $x_1, \ldots, x_m$ quasi-orthogonal.
        This system will remain fixed for the rest of the paper.
    \end{enumerate}
\end{defi}
 A quasi-orthogonal system is the only random ingredient of our construction.
 Probabilistic arguments will appear only in the proofs.

\subsection{Construction of the body $K$}

The hard to-approximate body will be constructed as the convex hull of two convex symmetric bodies in $\R^n$ shifted along the $e_0$ axis to form facets of the body $K \subseteq \R^{n+1}$. 
This symmetry implies that the barycenter and the Santal\'{o} point of $K$ are located on the $e_0$ axis.
The first body $Q \subseteq \R^n$ forming the top facet will be the convex symmetric hull of $m=nN$ quasi-orthogonal vectors.
As mentioned in the Introduction, symmetric bodies can be approximated by polytopes with far fewer vertices than Theorem \ref{thm: main} suggests. Nevertheless, we will show that when the center of homothety lies close to the bottom facet of $K$, approximating $K$ may still require at least $N$ vertices. To place the barycenter and the Santal\'{o} point of $K$ near this facet, we choose it to be a convex body whose volume is much larger than that of $Q$.
On the other hand, the shape of the bottom facet has to be chosen to make sure that it does not interfere with the hardness of approximation estimate, see Lemma \ref{lem: hardness} for details.
The bottom facet is constructed to accommodate both of these requirements simultaneously.

We will now proceed with the detailed construction.


\begin{defi} \label{def:Q}    
    Let  $\{x_i\}_{i \in [m]}$ be a quasi-orthogonal system in $\R^n$. 
    Define the polytope
    \begin{equation} \label{eq:def_Q}
        Q := \operatorname{conv}\{\pm x_i : i \in [m]\} \subseteq \R^n.
    \end{equation}

    Additionally, for any parameter $t > 0$, define the auxiliary body
    \begin{equation} \label{eq:def_Qt}
        Q_t := \operatorname{conv}\left(\frac{1}{t}Q \cup B_2^n\right).
    \end{equation}
\end{defi}
In the case where $x_1, \ldots, x_m$ are scaled independent Gaussian vectors, $Q$ is often referred to as a Gluskin polytope, because a similar construction was used in a celebrated paper of Gluskin  \cite{Gluskin1988}.

\begin{lemma}[Mean width estimate]
    \label{lem:Q_meanWidth}
    For every parameter $t \ge 0.02$, the Gaussian mean width satisfies
    \[
        w_G(Q_t) \le (1 + m^{-3}) \, w_G(B_2^n).
    \]
    In particular, setting $t=0.02$, we have for all sufficiently large $n$:
    \[
        w_G(Q) \le t \, w_G(Q_t) \le 0.05 \, w_G(B_2^n).
    \]
\end{lemma}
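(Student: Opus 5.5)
The plan is to use the support function. Since $Q_t$ is the convex hull of $\frac1t Q$ and $B_2^n$, and $Q$ is the convex hull of $\pm x_i$, we have for every $y\in\R^n$
\[
h_{Q_t}(y)=\max\Big(\norm{y},\ \tfrac1t\max_{i\in[m]}|\langle x_i,y\rangle|\Big).
\]
Taking $y=G$ standard Gaussian,
\[
w_G(Q_t)=\Exp h_{Q_t}(G)\le w_G(B_2^n)+\Exp\Big[\tfrac1t\max_i|\langle x_i,G\rangle|\,\mathbf 1_{\mathcal E}\Big],
\]
where $\mathcal E$ is the event $\{\max_i|\langle x_i,G\rangle|> t\norm{G}\}$. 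Off $\mathcal E$ the maximum equals $\norm G$, and on $\mathcal E$ we bound the maximum by the second term.

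The key step is to show that $\mathcal E$ is extremely unlikely. Write $x_i=\norm{x_i}v_i$ with $v_i\in S^{n-1}$. By \eqref{eq: xi_norm}, $\norm{x_i}\le 2\sqrt n/\Delta$, so on $\mathcal E$ we get $\max_i|\langle v_i,G\rangle|\ge t\Delta\norm G/(2\sqrt n)$. We split $\mathcal E$ according to whether $\norm G\ge \sqrt n/2$.

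If $\norm G\ge\sqrt n/2$, then $\max_i|\langle v_i,G\rangle|\ge t\Delta/4 = tC_{\ref{def: C_config}}\sqrt{\log m}/4$. Since $C_{\ref{def: C_config}}\ge 100C_{\ref{lem:max_projection}}$ and $t\ge 0.02$, this threshold is at least $\tfrac12 C_{\ref{lem:max_projection}}\sqrt{\log m}$. The threshold in Lemma~\ref{lem:max_projection} is $C_{\ref{lem:max_projection}}\sqrt{\log m}$, a factor $2$ larger, and this is the main obstacle. I would handle it either by rerunning the lemma's union bound directly (the Gaussian tail $2e^{-s^2/2}$ with $s=\tfrac12 C_{\ref{lem:max_projection}}\sqrt{\log m}$ still gives $m^{-10}$ provided $C_{\ref{lem:max_projection}}$ is a large absolute constant), or by invoking the remark after Lemma~\ref{lem:max_projection} that the exponent can be adjusted. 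Either way the probability of this part is at most $m^{-10}$.

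If $\norm G<\sqrt n/2$, standard Gaussian concentration gives probability at most $e^{-cn}$. This is not automatically $\le m^{-10}$, because $m$ may be as large as $e^{n/C}$. Two remedies are available. First, if $C_{\ref{def: C_config}}$ is large then $e^{-cn}\le m^{-10}$ whenever $\log m\le n/C_{\ref{def: C_config}}$, which is exactly \eqref{eq:m_range}. Second, one can avoid the split altogether by noting that the event $\{\max_i|\langle v_i,G\rangle|\ge s\norm G\}$ depends only on $G/\norm G$, which is uniform on the sphere. Then Lemma~\ref{lem:max_projection} for ${\rm unif}(\sqrt nS^{n-1})$ applies directly with $s\approx t\Delta/(2\sqrt n)\cdot\sqrt n$, and this threshold is still a large multiple of $\sqrt{\log m}$. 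I would use this second route, since it gives $\Prob(\mathcal E)\le m^{-10}$ cleanly.

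It remains to control the extra term. By Cauchy–Schwarz,
\[
\Exp\Big[\tfrac1t\max_i|\langle x_i,G\rangle|\mathbf 1_{\mathcal E}\Big]\le \tfrac1t\Big(\Exp\max_i\langle x_i,G\rangle^2\Big)^{1/2}\Prob(\mathcal E)^{1/2}.
\]
We bound $\Exp\max_i\langle x_i,G\rangle^2\le \sum_i\norm{x_i}^2\le 4mn$, so the term is at most $50\cdot 2\sqrt{mn}\,m^{-5}$. Using $w_G(B_2^n)\ge\sqrt{n}/2$, this is at most $m^{-3}w_G(B_2^n)$ for large $n$, which proves the first claim.

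For the "in particular" part, $Q\subseteq tQ_t$ by the definition of $Q_t$, so $w_G(Q)\le t\,w_G(Q_t)\le 0.02(1+m^{-3})w_G(B_2^n)\le 0.05\,w_G(B_2^n)$.
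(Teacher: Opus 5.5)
Your proof is correct and is essentially the paper's argument: you end up with the same bad event $\{\max_i|\langle x_i,G/\norm{G}\rangle|>t\}$ on the sphere, bounded by Lemma~\ref{lem:max_projection} at threshold $t\Delta/2\ge C_{\ref{lem:max_projection}}\sqrt{\log m}$. The one difference is minor. The paper factors $w_G(Q_t)=w_G(B_2^n)\,\Exp h_{Q_t}(\Theta)$ using the polar decomposition and applies the deterministic bound $h_{Q_t}(\Theta)\le 2\sqrt n/(t\Delta)$ on the bad event. You instead stay with $G$ and bound the excess by Cauchy--Schwarz, which loses a square root of the probability but still easily yields the $m^{-3}$ factor.
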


\begin{proof}
    Let $G \sim N(0,I_n)$ be a standard Gaussian vector.
 Although the proof can be written in terms of $G$ alone, it is more convenient to replace it with its normalized copy, i.e., a random unit vector. 
 
    Using the polar decomposition $G = \|G\|_2 \Theta$, where $\Theta$ is uniformly distributed on $S^{n-1}$ and independent of $\|G\|_2$, we can write
    \begin{equation}  \label{eq: width Q_t}
        w_G(Q_t) =  \Exp \|G\|_2 \cdot \Exp h_{Q_t}(\Theta) = w_G(B_2^n) \, \Exp h_{Q_t}(\Theta).
    \end{equation}
    In view of \eqref{eq:def_Qt}, the support function of $Q_t$ has the form
    \[
        h_{Q_t}(\theta) = \max\left\{ \frac{1}{t}h_Q(\theta), \, 1 \right\} \quad \text{for all } \theta \in S^{n-1}.
    \]
    Let $X := \sqrt{n}\Theta \sim \text{unif}(\sqrt{n}S^{n-1})$. For each $i \in [m]$, define the unit vector $v_i := x_i/\|x_i\|$. 
    By \eqref{eq: xi_norm}, we have
    \[
        h_Q(\Theta) = \max_{i \in [m]} |\langle x_i, \Theta \rangle| 
        = \frac{1}{\sqrt{n}} \max_{i \in [m]} |\langle x_i, X \rangle|
        \le \frac{2}{\Delta} \max_{i \in [m]} |\langle v_i, X \rangle|.
    \]
    Consider the event $\mathcal{E} := \{ h_Q(\Theta) \ge t \}$. For any $t \ge 0.02$, Lemma~\ref{lem:max_projection} implies
    \[
        \Prob(\mathcal{E}) 
        \le \Prob\left( \frac{2}{\Delta} \max_{i \in [m]} |\langle v_i, X \rangle| \ge t \right)
        = \Prob\left( \max_{i \in [m]} |\langle v_i, X \rangle| \ge \frac{t\Delta}{2} \right)
        \le m^{-10}.
    \]

    We now estimate the expectation $\Exp h_{Q_t}(\Theta)$ by splitting it over $\mathcal{E}$ and $\mathcal{E}^c$:
    \begin{itemize}
        \item On $\mathcal{E}^c$, we have $\frac{1}{t}h_Q(\Theta) < 1$, so $h_{Q_t}(\Theta) = 1$.
        \item On $\mathcal{E}$, we use the crude bound $h_{Q_t}(\Theta) \le \frac{1}{t}h_Q(\Theta) \le \frac{1}{t} \max_i \|x_i\| \le \frac{2\sqrt{n}}{t\Delta}$.
    \end{itemize}
    Thus,
    \[
        \Exp h_{Q_t}(\Theta) 
        \le 1 \cdot \Prob(\mathcal{E}^c) + \frac{2\sqrt{n}}{t\Delta} \Prob(\mathcal{E})
        \le 1 + \frac{100\sqrt{n}}{\Delta} m^{-10}.
    \]
    Given the constraints on $m$ and $\Delta$, the term $\frac{100\sqrt{n}}{\Delta} m^{-10}$ is strictly less than $m^{-3}$ for sufficiently large $n$. Substituting this  into \eqref{eq: width Q_t} completes the proof.
\end{proof}

The polar body of $Q_t$ can be computed using standard duality rules:
\begin{equation} \label{eq:Qt_polar}
    Q_t^\circ = \left( \operatorname{conv}\left(\frac{1}{t}Q \cup B_2^n \right) \right)^\circ 
    = \left(\frac{1}{t}Q\right)^\circ \cap (B_2^n)^\circ 
    = tQ^\circ \cap B_2^n.
\end{equation}
Volumetrically, both $Q_t$ and its polar $Q_t^\circ$ are essentially indistinguishable from the Euclidean ball, as established by the following lemma.

\begin{lemma}[Volume estimates]
    \label{lem:Qsharp_vol}
    For any parameter $t \ge 0.02$ and sufficiently large $n$, we have
    \[
        (1-o_n(1))|B_2^n| \le |Q_t^\circ| \le |B_2^n| \le |Q_t| \le (1+o_n(1))|B_2^n|.
    \]
\end{lemma}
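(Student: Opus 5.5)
The four inequalities split into two trivial inclusions and two nontrivial bounds. By \eqref{eq:def_Qt} we have $B_2^n \subseteq Q_t$, which gives $|B_2^n| \le |Q_t|$. By \eqref{eq:Qt_polar} we have $Q_t^\circ = tQ^\circ \cap B_2^n \subseteq B_2^n$, which gives $|Q_t^\circ| \le |B_2^n|$. The remaining work is the upper bound for $|Q_t|$ and the lower bound for $|Q_t^\circ|$.

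\emph{Upper bound for $|Q_t|$.} Apply Urysohn's inequality (Lemma \ref{lem:urysohn}) together with the mean width estimate of Lemma \ref{lem:Q_meanWidth}, which is valid for every $t \ge 0.02$:
\[
\frac{|Q_t|}{|B_2^n|} \le \left(\frac{w_G(Q_t)}{w_G(B_2^n)}\right)^n \le (1+m^{-3})^n \le \exp(n m^{-3}).
\]
Since $m \ge C n$ by \eqref{eq:m_range}, the right-hand side is $1+o_n(1)$.

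\emph{Lower bound for $|Q_t^\circ|$.} I would not use duality (Santaló or reverse Santaló), since those bounds are too lossy here. Instead I would compute directly. Let $X$ be uniform on $\sqrt n B_2^n$, so that $Y = X/\sqrt n$ is uniform on $B_2^n$. Then
\[
\frac{|Q_t^\circ|}{|B_2^n|} = \Prob\big(Y \in tQ^\circ\big) = \Prob\big(h_Q(Y) \le t\big).
\]
As in the proof of Lemma \ref{lem:Q_meanWidth}, the bound \eqref{eq: xi_norm} gives
\[
h_Q(Y) = \max_i |\langle x_i, Y\rangle| \le \frac{2}{\Delta}\max_i |\langle v_i, X\rangle|,
\]
where $v_i = x_i/\|x_i\|$. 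Lemma \ref{lem:max_projection} applies to the uniform distribution on $\sqrt n B_2^n$. Since $\Delta = C_{\ref{def: C_config}}\sqrt{\log m}$ with $C_{\ref{def: C_config}} \ge 100 C_{\ref{lem:max_projection}}$, and $t \ge 0.02$, we get $t\Delta/2 \ge C_{\ref{lem:max_projection}}\sqrt{\log m}$. Therefore
\[
\Prob\big(h_Q(Y) > t\big) \le \Prob\Big(\max_i |\langle v_i,X\rangle| \ge \tfrac{t\Delta}{2}\Big) \le m^{-10} = o_n(1),
\]
and hence $|Q_t^\circ| \ge (1-m^{-10})|B_2^n|$.

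The only step that needs care is the threshold comparison $t\Delta/2 \ge C_{\ref{lem:max_projection}}\sqrt{\log m}$. It holds precisely because of the choice of $C_{\ref{def: C_config}}$ in \eqref{def: C_config}; everything else is immediate.
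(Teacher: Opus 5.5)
Your proposal is correct and matches the paper's proof essentially step for step. The middle inequalities come from the inclusions $Q_t^\circ \subseteq B_2^n \subseteq Q_t$, the upper bound from Urysohn's inequality combined with Lemma~\ref{lem:Q_meanWidth}, and the lower bound from writing $|Q_t^\circ|/|B_2^n|$ as a probability for a uniform point in the ball and applying Lemma~\ref{lem:max_projection} with the threshold $t\Delta/2 \ge C_{\ref{lem:max_projection}}\sqrt{\log m}$. The only difference is cosmetic: you normalize to the unit ball and rescale by $\sqrt{n}$, whereas the paper works directly in $\sqrt{n}B_2^n$.
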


\begin{proof}
The central inequalities $|Q_t^\circ| \le |B_2^n| \le |Q_t|$ follow immediately from the inclusions $Q_t^\circ \subseteq B_2^n \subseteq Q_t$. We establish the outer bounds separately.

\step{Lower bound on $|Q_t^\circ|$} 
Since $Q_t^\circ = tQ^\circ \cap B_2^n$, it suffices to show that the volume of the removed portion is negligible:
    \[
        \frac{|B_2^n \setminus tQ^\circ|}{|B_2^n|} = \frac{| (\sqrt{n}B_2^n) \setminus (\sqrt{n}tQ^\circ) |}{|\sqrt{n}B_2^n|} = o_n(1).
    \]
    Let $Y \sim \text{unif}(\sqrt{n}B_2^n)$. Recalling that $Q^\circ = \{y : \max_i |\langle x_i, y \rangle| \le 1\}$, the complement condition $y \notin tQ^\circ$ corresponds to $\max_i |\langle x_i, y \rangle| > t$. Thus,
    \[
        \frac{| (\sqrt{n}B_2^n) \setminus (\sqrt{n}tQ^\circ) |}{|\sqrt{n}B_2^n|} 
        = \Prob\left( \max_{i\in[m]} |\langle Y, x_i \rangle| > t\sqrt{n} \right).
    \]
    Denote $v_i = x_i/\|x_i\|$. In view of \eqref{eq: xi_norm}, we have 
    \[
        |\langle Y, x_i \rangle| = \|x_i\| |\langle Y, v_i \rangle| \le \frac{2\sqrt{n}}{\Delta} |\langle Y, v_i \rangle|.
    \]
    Consequently, the inequality $|\langle Y, x_i \rangle| > t\sqrt{n}$ implies $|\langle Y, v_i \rangle| > \frac{t\Delta}{2}$.
    Since $t \ge 0.02$, the choice of $C_{\ref{def: C_config}}$ yields $\frac{t\Delta}{2} \ge C_{\ref{lem:max_projection}} \sqrt{\log m}$.
     Applying Lemma~\ref{lem:max_projection} (with the bound for the ball), we obtain
    \[
        \Prob\left( \max_{i\in[m]} |\langle Y, v_i \rangle| > \frac{t\Delta}{2} \right) \le m^{-10} = o_n(1),
    \]
    as $m \ge n$.

\step{Upper bound on $|Q_t|$}
    In view of Lemma \ref{lem:urysohn},
    \[
        \left(\frac{|Q_t|}{|B_2^n|}\right)^{1/n} \le \frac{w_G(Q_t)}{w_G(B_2^n)}.
    \]
    By Lemma~\ref{lem:Q_meanWidth}, the ratio on the right is bounded by $1+m^{-3}$. Therefore,
    \[
        |Q_t| \le (1+m^{-3})^n |B_2^n| \le \exp(n m^{-3}) |B_2^n|,
    \]
    where $n m^{-3} \to 0$ since $m \ge n$. 
    Thus, $|Q_t| \le (1+o_n(1))|B_2^n|$ as claimed.
\end{proof}

 Now, we construct the body which will be used to prove the hardness of the approximation.

\begin{defi} \label{def: k}
    We define the convex body $K \subseteq \R^{n+1}$ by
    \begin{equation} \label{eq: defK}
        K := \operatorname{conv}\big( e_0 + Q, \, \Qs \big),
    \end{equation}
    where we identify $Q$ and $\Qs$ with their embeddings in the hyperplane $\{x : \langle x, e_0 \rangle = 0\}$.
\end{defi}

    \label{rem:center_of_K}
    Since $Q$ and $\Qs$ are centrally symmetric in $\R^n$, the body $K$ is invariant under the reflection $(t,x) \mapsto (t,-x)$ on $\R \times \R^n$. Consequently, the barycenter of $K$, denoted $g(K)$, and its Santaló point, denoted $s(K)$, must lie on the $e_0$-axis.

\subsection{Proof of Theorem \ref{thm: main}}
We rely on three key lemmas regarding the geometry of $K$. 
It will be convenient to introduce the translated family
\[
    K(\eta) := K - \eta e_0 = \operatorname{conv}\big( (1-\eta)e_0 + Q, \; -\eta e_0 + \Qs \big).
\]
This shift will be used in the proof of Theorem \ref{thm: main}. To prove Theorem \ref{thm: main-dual}, we have to introduce a family of deformations of $K(\eta)$.
For $\kappa \ge 1$ set
\begin{equation}  \label{eq: def K(eta,kappa)}
    K(\eta, \kappa) := \operatorname{conv}\big( (1-\eta)e_0 + Q, \; -\kappa\eta e_0 + \kappa\Qs \big)
\end{equation}
for $\eta \in \mathbb{R}$ and $\kappa \ge 1$. This body is  a scaled version of $(K(\eta)^\circ+t e_0)^\circ$ for $\kappa=\kappa(t)$.

We state the first lemma in the form that will be used in both the proof of Theorem \ref{thm: main} and the proof of Theorem \ref{thm: main-dual}. 

\begin{lemma}[Hardness of Approximation]
    \label{lem: hardness}
   Let $\kappa > 0$ and 
     let $\eta$ be a number satisfying
    \[
        0 \le \eta \le \frac{1}{\sqrt{n}}.
    \]
    Any polytope $P$ such that
    \[
        {P \subseteq  K(\eta,\kappa):= {\rm conv}\big( (1-\eta)e_0 + Q, \; -\kappa\eta e_0 + \kappa\Qs \big)
        \subseteq \frac{n}{96\Delta^2 \max\{\kappa,1\}}P,}
    \]
    must have at least $ \frac{m}{n}$
    vertices.
\end{lemma}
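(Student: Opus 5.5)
The plan is a covering argument. Each point of the top facet $(1-\eta)e_0 + Q$ lies in $K(\eta,\kappa)$, hence in $\lambda P$ with $\lambda := \frac{n}{96\Delta^2\max\{\kappa,1\}}$. In particular the $2m$ points $z_{\pm i} := (1-\eta)e_0 \pm x_i$ lie in $\lambda P$. We will show that a single vertex of $P$ can ``serve'' only a bounded number of these points (certainly at most $2n$), so $P$ needs at least $m/n$ vertices.

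To make ``serve'' precise, fix $i$ and consider a linear functional $f_i(t,x) = a t + \langle x_i, x\rangle$ on $\R\times\R^n$, with a weight $a \ge 0$ on the $e_0$-coordinate chosen below (we will try $a \asymp n/\Delta^2$). Since $\lambda P = \operatorname{conv}\{\lambda p_j\}$ over the vertices $p_j$ of $P$, and $z_i \in \lambda P$, some vertex $p = p_j$ satisfies
\[
  \lambda f_i(p) \ge f_i(z_i) = a(1-\eta) + \|x_i\|^2 \ge a(1-\eta) + \frac{n}{4\Delta^2}
\]
by \eqref{eq: xi_norm}. Next write $p \in P \subseteq K(\eta,\kappa)$ as
\[
  p = s\big((1-\eta)e_0 + q\big) + (1-s)\big(-\kappa\eta e_0 + \kappa w\big), \qquad s\in[0,1],\ q\in Q,\ w\in\Qs.
\]
Since $\Qs = Q^\circ\cap B_2^n \subseteq Q^\circ$, we have $\langle x_i,w\rangle \le 1$. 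Hence
\[
  f_i(p) \le a\big(s(1-\eta) - (1-s)\kappa\eta\big) + s\langle x_i,q\rangle + (1-s)\kappa .
\]

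The central step combines the two displays. Because $\lambda\max\{\kappa,1\} = n/(96\Delta^2)$, the bottom-facet contribution $\lambda(1-s)\kappa$ is at most $n/(96\Delta^2)$, a small fraction of $\|x_i\|^2$. The role of $a$ is the following: the $e_0$-term forces $s$ to be bounded below, since otherwise $\lambda a s(1-\eta)$ cannot match $a(1-\eta)$ once $\lambda s<1$; note that $\eta\le 1/\sqrt n$ makes $\eta$ negligible. After dividing out we should obtain
\[
  \langle x_i, q\rangle \ge c\,\frac{n}{\Delta^2}
\]
for an absolute constant $c$, perhaps after separating the cases $\lambda s \ge 1$ and $\lambda s<1$, and tuning $a$ so that both lead to this conclusion. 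Now expand $q = \sum_j c_j \epsilon_j x_j$ with $\sum_j |c_j| \le 1$. By \eqref{eq: xi_norm} and \eqref{eq: xi_innerProduct},
\[
  \langle x_i,q\rangle \le |c_i|\,\frac{4n}{\Delta^2} + \frac{\sqrt n}{\Delta}.
\]
Since $\Delta^2 = C_{\ref{def: C_config}}^2\log m \le C_{\ref{def: C_config}}\, n$ by \eqref{eq:m_range}, the error term $\sqrt n/\Delta$ should be a small multiple of $n/\Delta^2$ when $C_{\ref{def: C_config}}$ is large. Then $|c_i| \ge c'$ for an absolute $c'>0$. The same reasoning applies to $z_{-i}$ using the functional $a t - \langle x_i,x\rangle$.

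Thus every vertex $p$ serves only indices $i$ with $|c_i(p)| \ge c'$, where $c_i(p)$ are the coefficients of one fixed representation of the corresponding $q(p)$, for a fixed choice of $(s,q,w)$ per vertex. Since $\sum_i |c_i|\le 1$, a vertex serves at most $2/c' \le 2n$ of the $2m$ points, and so $P$ has at least $m/n$ vertices.

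I expect the main obstacle to be the central step: choosing $a$ and handling the interplay between $s$, $\kappa$ and $\lambda$ so that the conclusion $\langle x_i,q\rangle \gtrsim n/\Delta^2$ really follows, with explicit constants compatible with the factor $96$. A related difficulty is checking that the quasi-orthogonality error $\sqrt n/\Delta$ is dominated, which depends on how large $C_{\ref{def: C_config}}$ is relative to the other constants. If this fails, the fallback is a weaker counting argument, bounding $\sum_i \langle x_i,q\rangle_+^2$ through an operator-norm estimate for subfamilies of the $x_i$. That estimate would naturally give the $n$-per-vertex bound which the statement's $m/n$ suggests.
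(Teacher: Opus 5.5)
There is a genuine gap, and it is in the step you flagged. For no $a\ge 0$ does ``$p$ serves $z_i$'' force $|c_i|$ to be large, so no tuning of $a$ and no case split on $s$ can yield $\langle x_i,q\rangle\ge c\,n/\Delta^2$. Note that $\lambda\ge 1$, since otherwise $K(\eta,\kappa)\subseteq\lambda P\subseteq\lambda K(\eta,\kappa)$ is impossible. A nonnegative height weight therefore only helps high points. Take $p=(1-\eta)e_0+x_j\in K(\eta,\kappa)$. It serves $z_i$ whenever $\lambda\langle x_i,x_j\rangle\ge\|x_i\|^2$, i.e.\ whenever $\langle x_i,x_j\rangle\ge 384\max\{\kappa,1\}$.

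Dividing by $\lambda\asymp n/\Delta^2$ leaves a threshold of constant size, while \eqref{eq: xi_innerProduct} only bounds cross terms by $\sqrt n/\Delta\gg 1$. Concretely, take $\kappa=1$ and a Gaussian design with $\log m\ll\sqrt n$; it satisfies \eqref{eq: xi_norm}--\eqref{eq: xi_innerProduct}. About half of all indices $i$ then clear this threshold, although $c_i=0$ for $i\neq j$. For your suggested $a\asymp n/\Delta^2$ it is worse still: the point $(1-\eta)e_0$ itself ($q=0$) serves every $z_{\pm i}$ once $\lambda$ exceeds a constant. So the fallback through $\sum_i\langle x_i,q\rangle_+^2$ cannot rescue a serving relation that a single point satisfies for $\sim m/2$ indices. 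Separately, $|c_i|\ge c'$ with $c'$ absolute is more than one can hope for, because a large $\lambda$ lets one vertex serve many indices. The right target is $|c_i|\gtrsim\Delta^2/n$, which still gives $N\ge m/n$.

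The missing idea is to make the $e_0$-weight \emph{negative}, of order $\sqrt n/\Delta$. The paper tests against $x^-_{i,\sigma}=-\frac{2\sqrt n}{\Delta}e_0+\sigma x_i$, i.e.\ $a=-2\sqrt n/\Delta$. In your decomposition the top part then contributes
\[
s\Big[-2(1-\eta)\tfrac{\sqrt n}{\Delta}+\epsilon_ic_i\|x_i\|^2+\sum_{j\ne i}\epsilon_jc_j\langle x_i,x_j\rangle\Big]\le s\,\epsilon_ic_i\|x_i\|^2 ,
\]
because the negative height term absorbs the whole quasi-orthogonality error (using $\sum_j|c_j|\le 1$ and $\eta\le\frac12$). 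The diagonal value is still $f_i(z_i)\ge n/(8\Delta^2)$. The price is paid on the bottom facet, whose contribution becomes $(1-s)\kappa\big(2\eta\sqrt n/\Delta+\langle x_i,w\rangle\big)\le 3\kappa$. This, not mere negligibility of $\eta$, is what the hypothesis $\eta\le 1/\sqrt n$ is for.

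Covering now gives a vertex with $f_i(p)\ge f_i(z_i)/\lambda\ge 12\max\{\kappa,1\}$. Hence $s\,\epsilon_ic_i\cdot\frac{4n}{\Delta^2}\ge 9$, so $|c_i|\ge\frac{9\Delta^2}{4n}$ with no case split on $s$. Each vertex therefore serves at most $\frac{4n}{9\Delta^2}$ of the points $z_{\pm i}$, and $N\ge m/n$ follows. With this change and the corrected target, your decomposition-and-counting scheme becomes the paper's proof.
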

We note that when $\kappa = 1$, the body in the middle is exactly $K - \eta e_0$.
\begin{lemma}[Location of the Barycenter]
    \label{lem: delta_centerOfMass}
    The barycenter of $K$ is located at $g(K) = \eta_g e_0$, where  $\eta_g$ satisfies
    \[
        \frac{1}{10(n+1)} \le \eta_g \le \frac{10}{n+1}.
    \]
\end{lemma}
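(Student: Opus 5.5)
We compute $\eta_g$ by integrating over horizontal slices of $K$. The volume of each slice is trapped between two explicit functions, using Lemma~\ref{lem:Qsharp_vol} from below and Urysohn's inequality (Lemma~\ref{lem:urysohn}) together with Lemma~\ref{lem:Q_meanWidth} from above. Since $K$ is the convex hull of two convex bodies lying in the parallel hyperplanes $\{\langle x,e_0\rangle=0\}$ and $\{\langle x,e_0\rangle=1\}$, its slice at height $s\in[0,1]$ is exactly
\[
K\cap\{\langle x,e_0\rangle=s\} = s e_0 + \big((1-s)\Qs + sQ\big).
\]
Indeed, any point of the hull at height $s$ is a combination $(1-s)a+s(e_0+b)$ with $a\in\Qs$ and $b\in Q$, and conversely every such combination lies in $K$. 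Writing $f(s):=|(1-s)\Qs+sQ|$ for the $n$-dimensional volume, we get
\[
\eta_g=\frac{\int_0^1 s f(s)\,ds}{\int_0^1 f(s)\,ds}.
\]

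For the lower bound on $f$, we use $0\in Q$, so $(1-s)\Qs\subseteq (1-s)\Qs+sQ$. Then Lemma~\ref{lem:Qsharp_vol} with $t=1$ gives $f(s)\ge (1-s)^n|\Qs|\ge (1-o_n(1))(1-s)^n|B_2^n|$.

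For the upper bound on $f$, Gaussian mean width is linear under Minkowski addition. Since $\Qs\subseteq B_2^n$, Lemma~\ref{lem:Q_meanWidth} gives
\[
w_G\big((1-s)\Qs+sQ\big)\le (1-s)\,w_G(B_2^n)+0.05\,s\,w_G(B_2^n)=(1-0.95s)\,w_G(B_2^n).
\]
Urysohn's inequality then yields $f(s)\le (1-0.95s)^n|B_2^n|$.

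Next we plug these bounds into the ratio, using the upper bound for one integral and the lower bound for the other:
\begin{align*}
\int_0^1 s(1-s)^n ds&=\frac{1}{(n+1)(n+2)}, & \int_0^1 (1-s)^n ds&=\frac{1}{n+1},\\
\int_0^1 s(1-0.95s)^n ds&\le \frac{1}{0.95^2(n+1)(n+2)}, & \int_0^1(1-0.95s)^n ds&\le \frac{1}{0.95(n+1)}.
\end{align*}
The upper bound is
\[
\eta_g\le \frac{0.95^{-2}\,(n+1)^{-1}(n+2)^{-1}}{(1-o_n(1))(n+1)^{-1}}\le \frac{2}{n+2}\le\frac{10}{n+1}.
\]
The lower bound is
\[
\eta_g\ge \frac{(1-o_n(1))(n+1)^{-1}(n+2)^{-1}}{0.95^{-1}(n+1)^{-1}}\ge \frac{0.9}{n+2}\ge \frac{1}{10(n+1)}
\]
for $n$ large. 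The barycenter lies on the $e_0$-axis by the reflection symmetry of $K$ noted after Definition~\ref{def: k}, so $g(K)=\eta_g e_0$.

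The main point needing care is the upper bound on $f(s)$. The body $Q$ itself is not contained in a small ball, since $\norm{x_i}\sim\sqrt n/\Delta$. So a naive inclusion such as $(1-s)\Qs+sQ\subseteq(1-s+cs)B_2^n$ is unavailable, and the argument must route through mean width and Urysohn's inequality. That is precisely why Lemma~\ref{lem:Q_meanWidth} was proved. The slice identity and the one-variable integrals are routine.
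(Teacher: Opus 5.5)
Your proof is correct, but it argues differently from the paper. Both arguments use the same two slice estimates. The first is the lower bound $f(s)\ge(1-s)^n|\Qs|$; integrated, this is exactly the paper's cone bound $|K|\ge|\Qs|/(n+1)$. The second is the Urysohn bound $f(s)\le(1-0.95s)^n|B_2^n|$. The paper, however, never writes down the first moment. It argues by contradiction through Gr\"unbaum's inequality: if $\eta_g$ were outside the stated range, one of the half-spaces $\{\langle x,e_0\rangle\ge\eta_g\}$ or $\{\langle x,e_0\rangle\le\eta_g\}$ would carry less than a $1/e$ fraction of $|K|$.

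You instead compute $\eta_g=\int_0^1 sf(s)\,ds\big/\int_0^1 f(s)\,ds$ directly, pairing the upper slice bound with one integral and the lower slice bound with the other. This avoids Gr\"unbaum entirely and gives sharper constants, namely $\frac{0.9}{n+2}\le\eta_g\le\frac{2}{n+2}$. Your Beta-integral evaluations and the final numerical comparisons check out. Your route also sidesteps the paper's somewhat loosely written lower-bound step.

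What the Gr\"unbaum route buys is uniformity with the later lemmas. It needs only an upper bound on the volume on one side of a hyperplane and a lower bound on the total volume. That one-sided information is exactly what is available in Lemmas~\ref{lem: delta_Santalopt} and~\ref{lem:gamma_g_bound}, where the polar body is only sandwiched as $\Cm'\subseteq K(\eta)^\circ\subseteq\Cm$. There, a first-moment computation like yours would still work, but it would have to treat the regions $y_0<0$ and $y_0>0$ separately.
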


\begin{lemma}[Location of the Santaló Point]
    \label{lem: delta_Santalopt}
    The Santaló point of $K$ is located at $s(K) = \eta_s e_0$, where $\eta_s$ satisfies
    \[
        0 \le \eta_s \le \frac{10}{n+1}.
    \]
\end{lemma}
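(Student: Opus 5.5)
We use the variational characterization of the Santaló point. By the symmetry noted after Definition \ref{def: k}, $s(K)=\eta_s e_0$. The point $\eta_s$ is the unique minimizer on $(0,1)$ of
\[
  F(\eta):=|K(\eta)^\circ| = \frac{1}{n+1}\int_{S^n} h_{K(\eta)}(\theta)^{-(n+1)}\,d\theta ,
\]
and $F$ is strictly convex there. Since $h_{K(\eta)}(\theta)=h_K(\theta)-\eta\theta_0$, differentiating under the integral gives
\[
  F'(\eta)=\int_{S^n}\theta_0\, h_{K(\eta)}(\theta)^{-(n+2)}\,d\theta=(n+2)\int_{K(\eta)^\circ}\langle z,e_0\rangle\,dz .
\]
So $F'(\eta)$ is a positive multiple of the $e_0$-moment of $K(\eta)^\circ$.

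The lower bound $\eta_s\ge 0$ is immediate. For $\eta\le 0$ the origin is not an interior point of $K(\eta)$: the bottom facet $-\eta e_0+\Qs$ lies in $\{s\ge 0\}$ when $\eta\le0$. Hence $F=\infty$ there, so $\eta_s>0$. For the upper bound, by convexity of $F$ it suffices to show $F'(\eta_0)>0$ at $\eta_0=\frac{10}{n+1}$, i.e.
\[
  \int_{K(\eta_0)^\circ} s\,d(s,y)>0 .
\]

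\textbf{Describing the polar.} From $h_{K(\eta)}(s,y)=\max\{(1-\eta)s+h_Q(y),\,-\eta s+h_{\Qs}(y)\}$ and $h_{\Qs}=\|\cdot\|_{Q_1}$, we get
\[
  K(\eta)^\circ=\{(s,y):\ h_Q(y)\le 1-(1-\eta)s,\ \ y\in(1+\eta s)Q_1\}.
\]
The range of $s$ is $[-1/\eta,\,1/(1-\eta)]$. Write $\Phi(s)$ for the $n$-dimensional volume of the fiber over $s$.

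\textbf{Negative part (upper bound).} For $s<0$ we simply use $\Phi(s)\le|(1-\eta|s|)Q_1|\le(1+o_n(1))(1-\eta|s|)^n|B_2^n|$, by Lemma \ref{lem:Qsharp_vol} with $t=1$. This gives
\[
  \int_{-1/\eta}^0 |s|\,\Phi(s)\,ds\le (1+o_n(1))\frac{|B_2^n|}{\eta^2(n+1)(n+2)},
\]
which at $\eta=\eta_0$ is at most about $|B_2^n|/100$.

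\textbf{Positive part (lower bound).} For $0\le s\le 0.98$ the fiber contains
\[
  (1-(1-\eta)s)Q^\circ\cap Q_1 \supseteq (1-s)Q^\circ\cap B_2^n = Q^\circ_{1-s}.
\]
Since $1-s\ge 0.02$, Lemma \ref{lem:Qsharp_vol} gives $\Phi(s)\ge(1-o_n(1))|B_2^n|$. Hence the positive moment is at least $(1-o_n(1))\tfrac{0.98^2}{2}|B_2^n|\approx 0.48|B_2^n|$. This dominates the negative moment, so $F'(\eta_0)>0$ and $\eta_s<\eta_0$.

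The main point to get right is the positive-side lower bound. It relies on the fact that $h_Q$ is tiny on most of the ball, which is exactly what Lemma \ref{lem:Qsharp_vol} encodes: the constraint coming from the top facet barely cuts the fibers for $s$ up to nearly $1$. By contrast, crude concavity (Brunn) estimates would not suffice. One should also double-check the sign convention in the derivative formula, and justify differentiation under the integral for $\eta$ in the interior range, which holds because $h_{K(\eta)}$ is bounded below there.
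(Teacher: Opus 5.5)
Your proof is correct, but it reaches the bound by a different mechanism than the paper's. The paper cites the standard fact that the Santal\'{o} point of $K(\eta)$ is at the origin exactly when $K(\eta)^\circ$ is centered. It then applies Gr\"{u}nbaum's inequality to $K(\eta_s)^\circ$ in direction $e_0$ and works only with volumes. The lower half is bounded above by $|\Cm\cap\{y_0\le 0\}|=|Q_1|/(\eta(n+1))$, and the whole polar is bounded below by $|\Cm'|=(1+0.98\eta)^{n+1}|Q_{0.02}^\circ|/(\eta(n+1))$. The common factor $1/\eta$ cancels, so the bound comes entirely from $(1+0.98\eta)^{n+1}\le 2e$.

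You work with first moments instead. Your formula $F'(\eta)=(n+2)\int_{K(\eta)^\circ}\langle z,e_0\rangle\,dz$ is exactly the computation behind the fact the paper quotes. You then compare a negative moment of at most $(1+o_n(1))|B_2^n|/(\eta^2(n+1)(n+2))$, coming from the same cone $\Cm$, with a positive moment of order $|B_2^n|$. The latter comes from the fibers $Q^\circ_{1-s}$, a height-dependent version of the paper's $\Cm'$. It is worth stating that the $o_n(1)$ there is uniform in $s\in[0,0.98]$, because $Q^\circ_{1-s}\supseteq Q^\circ_{0.02}$. You finish with convexity of $F$.

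So the geometric input is the same in both proofs: $\Cm$ controls the bottom, and the top-facet constraint barely cuts the fibers up to height $0.98$. Your version is self-contained and needs no Gr\"{u}nbaum inequality. The paper outsources the variational step to the barycenter characterization and then needs only volume estimates.

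Convexity is not even necessary in your approach. Since $F'(\eta_s)=0$, your two moment bounds evaluated directly at $\eta=\eta_s$ give $\eta_s^2(n+1)(n+2)\le (1+o_n(1))/0.48$. This means $\eta_s\le (1.45+o_n(1))/(n+1)$, comfortably below $10/(n+1)$.
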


We will derive Theorem \ref{thm: main} now and defer the  proofs of the key lemmas to Section \ref{sec: key lemmas}.

\begin{proof}[Proof of Theorem \ref{thm: main}]
    Given the integer $N$ satisfying \eqref{eq: l_range_main}, set $m$
    \[
        m = n N.
    \]
    We will choose $C_{\ref{thm: main}}$  sufficiently large relative  to ensure that $m$ satisfies the range requirements of Definition \ref{def: parameter_conf}. 

    \step{Step 1: Location of the center}
    Let $\xi$ be any point on the line segment connecting the barycenter $g(K)$ and the Santaló point $s(K)$. As we noticed before, both centers lie on the $e_0$-axis. Furthermore, Lemmas \ref{lem: delta_centerOfMass} and \ref{lem: delta_Santalopt} yield
     that $\xi = \eta e_0$ with
    \[
        0 \le \eta \le \max(\eta_g, \eta_s) \le  \frac{10}{n+1} \le \frac{1}{\sqrt{n}},
    \]
    where the last inequality holds for sufficiently large $n$. Thus, the condition on $\eta$ in Lemma \ref{lem: hardness} is satisfied.

    \step{Step 2: Hardness of approximation}
    Consider any polytope $P$ satisfying the approximation condition
    \[
        P \subseteq K - \xi \subseteq \frac{n}{C_{\ref{thm: main}}\log N} P.
    \]
    Recalling that $\Delta = C_{\ref{def: C_config}} \sqrt{\log m} \le 2 C_{\ref{def: C_config}} \sqrt{\log N}$, we can choose $C_{\ref{thm: main}}$ large enough such that
    \[
        \frac{n}{C_{\ref{thm: main}}\log N} \le \frac{n}{96\Delta^2}.
    \]

    Consequently, $P$ satisfies the hypothesis of Lemma \ref{lem: hardness} with $\kappa=1$.
    In view of this lemma, the number of vertices of $P$ is at least
    \[
        \frac{m}{n} = N,
    \]
    which completes the proof.
\end{proof}

\section{Proofs of the key lemmas}  \label{sec: key lemmas}
\subsubsection{Hardness of Approximation}

To analyze the complexity of polytopes that approximate $K(\eta)$, we introduce a set of test vectors designed to distinguish the vertices of the top face.
This top face is the same for all values of $\kappa$, so the definition of the test vectors will be independent of this parameter.
\begin{defi}[Test Vectors]
    \label{def: test_vectors}
    Fix a shift parameter $\eta \le \frac{1}{2}$. For each $i \in [m]$ and $\sigma \in \{\pm 1\}$, we define the \emph{primary vectors} $x_{i,\sigma}^+ \in K(\eta,\kappa)$ and the \emph{dual functional directions} $x_{i,\sigma}^-$ as follows:
    \begin{align*}
        x_{i,\sigma}^+ &:= (1-\eta) e_0 + \sigma x_i, \\
        x_{i,\sigma}^- &:= - \frac{2\sqrt{n}}{\Delta} e_0 + \sigma x_i.
    \end{align*}
\end{defi}

The following lemma establishes the separation properties of these vectors.

\begin{lemma}
    \label{lem: separation_prop}
    For sufficiently large $n$, the test vectors satisfy the following inner product estimates:
    \begin{enumerate}
        \item \emph{Diagonal dominance:} For every pair $(i, \sigma)$,
        \begin{equation} \label{eq: innerProductplusminus_match}
             \frac{n}{8\Delta^2} \le \langle x_{i,\sigma}^+, \, x_{i,\sigma}^- \rangle \le \frac{4n}{\Delta^2}.
        \end{equation}
        \item \emph{Off-diagonal orthogonality:} For distinct pairs $(i,\sigma_i) \neq (j,\sigma_j)$,
        \begin{equation} \label{eq: innerProductplusminus}
            \langle x_{i,\sigma_i}^+, \, x_{j,\sigma_j}^- \rangle \le 0.
        \end{equation}
    \end{enumerate}
\end{lemma}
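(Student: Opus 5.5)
Everything follows from expanding the inner product directly. Since $e_0$ is orthogonal to $\R^n$,
\[
\langle x_{i,\sigma_i}^+, x_{j,\sigma_j}^- \rangle = -(1-\eta)\frac{2\sqrt{n}}{\Delta} + \sigma_i\sigma_j \langle x_i, x_j\rangle .
\]
We then bound the two terms using the norm bounds \eqref{eq: xi_norm}, the quasi-orthogonality \eqref{eq: xi_innerProduct}, and the assumption $\eta \le \frac12$, so that $\frac12 \le 1-\eta$. We also need $1-\eta \le 1$ in the diagonal case; this holds when $\eta \ge 0$, which is the range of interest in Lemma~\ref{lem: hardness}. For negative $\eta$ one only gets $1-\eta$ bounded on a controlled range, and this does not affect the estimates below.

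\emph{Diagonal case.} Take $j=i$ and $\sigma_j=\sigma_i$. The second term equals $\norm{x_i}^2 \in \left[\frac{n}{4\Delta^2}, \frac{4n}{\Delta^2}\right]$. The first term is nonpositive with absolute value at most $\frac{2\sqrt{n}}{\Delta}$, so the upper bound $\frac{4n}{\Delta^2}$ is immediate. For the lower bound we need
\[
\frac{2\sqrt{n}}{\Delta} \le \frac{n}{8\Delta^2}, \quad\text{i.e.}\quad \Delta \le \frac{\sqrt n}{16}.
\]
This follows from $\Delta = C_{\ref{def: C_config}}\sqrt{\log m}$ and $m \le \exp(n/C_{\ref{def: C_config}})$, which give $\Delta \le \sqrt{C_{\ref{def: C_config}}\, n}$.

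This is the only delicate step, because $\sqrt{C_{\ref{def: C_config}} n}$ is not literally at most $\sqrt n/16$. To close it I would tighten the upper bound on $m$ by a constant factor (i.e.\ enlarge the constant in the exponent, which the later choice of $C_{\ref{thm: main}}$ allows). Alternatively, one can note that in the application $m = nN$ with $N \le \exp(n/C_{\ref{thm: main}})$, and $C_{\ref{thm: main}}$ is chosen large, so $\Delta^2 = C_{\ref{def: C_config}}^2\log m \le n/256$.

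\emph{Off-diagonal case, $i\ne j$.} By \eqref{eq: xi_innerProduct}, $\sigma_i\sigma_j\langle x_i,x_j\rangle \le \frac{\sqrt n}{\Delta}$. Since $1-\eta \ge \frac12$, the first term is at most $-\frac{\sqrt n}{\Delta}$, so the sum is $\le 0$. This is exactly why the coefficient of $e_0$ in $x^-_{i,\sigma}$ is $2\sqrt n/\Delta$.

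\emph{Off-diagonal case, $i=j$, $\sigma_j=-\sigma_i$.} The second term is $-\norm{x_i}^2 < 0$ and the first term is negative, so the sum is negative.
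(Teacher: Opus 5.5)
Your proof is correct and is the same direct expansion the paper uses: the norm bounds \eqref{eq: xi_norm} with $0\le\eta\le\frac12$ for the diagonal term, quasi-orthogonality plus $1-\eta\ge\frac12$ for $i\ne j$, and a sign check for $i=j$, $\sigma_j=-\sigma_i$. You are also right that the diagonal lower bound needs $\Delta\le\sqrt n/16$ and $\eta\ge 0$, which the paper uses only implicitly (it just asserts ``$\Delta\ll\sqrt n$''); your fix via a large $C_{\ref{thm: main}}$ with $m=nN$ is valid, but drop the aside about negative $\eta$, because for sufficiently negative $\eta$ the diagonal lower bound genuinely fails.
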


\begin{proof}
    \step{1. Off-diagonal case}
    Consider different pairs $(i, \sigma_i) \neq (j, \sigma_j)$. 
    Suppose first that $i \neq j$.
    In view of \eqref{eq: xi_innerProduct},
    \[
        \langle x_{i,\sigma_i}^+, \, x_{j,\sigma_j}^- \rangle
        = -2(1-\eta)\frac{\sqrt{n}}{\Delta} + \sigma_i \sigma_j \langle x_i, x_j \rangle
        \le 0,
    \]
    where we used assumption $\eta \le 1/2$.
    In the case $i = j$ but $\sigma_i \neq \sigma_j$, this is straightforward. 

    \step{2. Diagonal case}
    For the matched pair, we have:
    \[
        \langle x_{i,\sigma}^+, \, x_{i,\sigma}^- \rangle = -2(1-\eta)\frac{\sqrt{n}}{\Delta} + \|x_i\|^2.
    \]
    Using \eqref{eq: xi_norm}, we derive the upper bound:
    \[
        \langle x_{i,\sigma}^+, x_{i,\sigma}^- \rangle \le \|x_i\|^2 \le \frac{4n}{\Delta^2}.
    \]
    For the lower bound, since $\Delta \ll \sqrt{n}$ for large $n$, we have:
    \[
        \langle x_{i,\sigma}^+, x_{i,\sigma}^- \rangle \ge -\frac{2\sqrt{n}}{\Delta} + \frac{n}{4\Delta^2} 
        = \frac{n}{\Delta^2} \left( \frac{1}{4} - \frac{2\Delta}{\sqrt{n}} \right) \ge \frac{n}{8\Delta^2}.
    \]
\end{proof}

\begin{proof}[Proof of Lemma \ref{lem: hardness}]
Let
    \[
        R := \frac{n}{96\Delta^2 \max\{\kappa,1\}},
    \]
    and let $P$ be a polytope with $N$ vertices $\{w_\alpha\}_{\alpha=1}^N$ satisfying $P \subseteq K(\eta) \subseteq R P$.

    \step{1. Convex decomposition of $P$'s vertices}
    Since $P \subseteq K(\eta,\kappa)$, every vertex $w_\alpha$ of $P$ lies in $K(\eta,\kappa)$. 
    Recalling that $K(\eta,\kappa) = \operatorname{conv}\big( (1-\eta)e_0 + Q, \; -\kappa\eta e_0 + \kappa\Qs \big)$, we can express each $w_\alpha$ as a convex combination of points from the top part $(1-\eta)e_0+Q$ and one point from the bottom part $-\kappa\eta e_0 + \kappa\Qs$.
    
    Specifically, for each $\alpha \in [N]$, there exist nonnegative coefficients $\lambda_{i,\sigma}^{(\alpha)}$ and $\lambda_{B}^{(\alpha)}$ with
    \[
        \sum_{i\in[m]}\sum_{\sigma\in\{\pm1\}}\lambda_{i,\sigma}^{(\alpha)}+\lambda_B^{(\alpha)}=1,
    \]
    and a vector $y^{(\alpha)} \in \Qs$ such that
    
    \begin{equation} \label{eq: w_alpha_decomp}
        w_\alpha = \sum_{i \in [m]} \sum_{\sigma \in \{\pm 1\}} \lambda_{i,\sigma}^{(\alpha)} x^+_{i,\sigma} + \kappa\lambda_{B}^{(\alpha)} \big(- \eta e_0 + y^{(\alpha)}\big).
    \end{equation}
    Here, the vectors $x^+_{i,\sigma}$ are the test vectors defined in Definition \ref{def: test_vectors}, which are vertices $(1-\eta)e_0 +Q$. 

    \step{2. Covering argument}
    We now exploit the inclusion $K(\eta,\kappa) \subseteq R P$.
    For each index $i \in [m]$, consider the test pair $(x_{i,+1}^+, x_{i,+1}^-)$. Since $x_{i,+1}^+ \in K(\eta,\kappa) \subseteq R P$, we get
    \[
        R \max_{\alpha \in [N]} \langle w_\alpha, x_{i,+1}^- \rangle
        = \max_{z \in R P} \langle z, x_{i,+1}^- \rangle
        \ge \langle x_{i,+1}^+, x_{i,+1}^- \rangle
        \ge \frac{n}{8\Delta^2}.
    \]
    Here, the equality uses $RP=\operatorname{conv}\{Rw_\alpha:\alpha\in[N]\}$ and the fact that a linear functional attains its maximum on a polytope at a vertex; the last inequality follows from Lemma~\ref{lem: separation_prop}.
    
    With substitution $R = \frac{n}{96\Delta^2 \max\{\kappa,1\}}$, this simplifies to:
    \[
        \max_{\alpha \in [N]} \langle w_\alpha, x_{i,+1}^- \rangle \ge \frac{1}{R} \frac{n}{8\Delta^2} = \frac{96\Delta^2 \max\{\kappa,1\}}{n} \cdot \frac{n}{8\Delta^2} = 12\max\{\kappa,1\}.
    \]
    
    Motivated by this, we say that a vertex $w_\alpha$ \emph{covers} an index $i \in [m]$ if
    \begin{equation} \label{eq: covering_def}
        \langle w_\alpha, x_{i,+1}^- \rangle \ge 12\max\{\kappa,1\}.
    \end{equation}
    The previous bound implies that every index $i \in [m]$ is covered by at least one vertex $w_\alpha$.

    \step{3. Upper bound on the inner product}
    Fix any pair $(\alpha, i)$. Substituting the convex decomposition \eqref{eq: w_alpha_decomp} into the inner product, we have:
    \begin{align*}
        \langle w_\alpha, x_{i,+1}^- \rangle
        &= \sum_{j \in [m]} \sum_{\sigma \in \{\pm 1\}} \lambda_{j,\sigma}^{(\alpha)} \langle x^+_{j,\sigma}, x_{i,+1}^- \rangle
        + \kappa \lambda_B^{(\alpha)} \langle -\eta e_0 + y^{(\alpha)}, \, x_{i,+1}^- \rangle.
    \end{align*}
    
    We bound the two parts separately:
    \begin{itemize}
        \item \textit{Sum over vertices:} By Lemma \ref{lem: separation_prop}, $\langle x^+_{j,\sigma}, x_{i,+1}^- \rangle \le 0$ for all $(j,\sigma) \neq (i,+1)$. Thus, we can drop all terms except the one matching $(i,+1)$:
        \[
            \sum_{j,\sigma} \lambda_{j,\sigma}^{(\alpha)} \langle x^+_{j,\sigma}, x_{i,+1}^- \rangle 
            \le \lambda_{i,+1}^{(\alpha)} \langle x^+_{i,+1}, x_{i,+1}^- \rangle 
            \stackrel{\eqref{eq: innerProductplusminus_match}}{\le} \lambda_{i,+1}^{(\alpha)} \frac{4n}{\Delta^2}.
        \]
        \item \textit{Bottom facet term:} Recalling that $x_{i,+1}^- = -\frac{2\sqrt{n}}{\Delta}e_0 + x_i$ and observing that $x_i, y^{(\alpha)} \perp e_0$, we compute
        \[
            \langle -\kappa \eta e_0 + \kappa y^{(\alpha)}, \, x_{i,+1}^- \rangle
            = \kappa \left(\eta \frac{2\sqrt{n}}{\Delta} + \langle y^{(\alpha)}, x_i \rangle \right).
        \]
        Because $\eta \le 1/\sqrt{n}$ and $\Delta \ge 1$, we have $\eta \frac{2\sqrt{n}}{\Delta} \le 2$.
        Since $y^{(\alpha)} \in \Qs 
        = Q^\circ \cap B_2^n 
        \subseteq Q^\circ$, we have $\langle y^{(\alpha)}, x_i \rangle \le 1$. Hence
        \[
            \kappa \lambda_B^{(\alpha)} \langle -\eta e_0 + y^{(\alpha)}, x_{i,+1}^- \rangle
            = \lambda_B^{(\alpha)}\langle -\kappa \eta e_0 + \kappa y^{(\alpha)}, x_{i,+1}^- \rangle
            \le 3\kappa \lambda_B^{(\alpha)}
            \le 3\kappa.
        \]
    \end{itemize}
    Consequently, we have a simplified bound:
    \begin{equation}\label{eq: w_alpha_upper_bound}
        \langle w_\alpha, x_{i,+1}^- \rangle \le \frac{4n}{\Delta^2} \lambda_{i,+1}^{(\alpha)} + 3\kappa.
    \end{equation}

    \step{4. Counting vertices}
    Let $S_\alpha \subseteq [m]$ denote the set of indices covered by the vertex $w_\alpha$, i.e.,
    \[
        S_\alpha := \{ i \in [m] : \langle w_\alpha, x_{i,+1}^- \rangle \ge 12\max\{\kappa,1\} \}.
    \]
    For any $i \in S_\alpha$, the bounds  \eqref{eq: covering_def} and \eqref{eq: w_alpha_upper_bound} imply
    \[
        12\max\{\kappa,1\} 
    \le 
        \frac{4n}{\Delta^2} \lambda_{i,+1}^{(\alpha)} + 3\kappa 
        \quad \implies \quad \lambda_{i,+1}^{(\alpha)} \ge \frac{9 \Delta^2}{4n}\,.
    \]
    Since $\sum_{i \in [m]} \lambda_{i,+1}^{(\alpha)} \le 1$, we can estimate the size of each covering set $S_\alpha$:
    \[
        1 \ge \sum_{i \in S_\alpha} \lambda_{i,+1}^{(\alpha)} \ge |S_\alpha| \frac{9 \Delta^2}{4n} \quad \implies \quad |S_\alpha| \le \frac{4n}{9  \Delta^2}.
    \]
    Recall from Step 2 that every index $i \in [m]$ is covered by at least one vertex $w_\alpha$. Therefore, the total number of vertices $N$ satisfies
    \[
        m \le \sum_{\alpha=1}^N |S_\alpha| \le N \frac{4n}{9\Delta^2}.
    \]
    Rearranging and using $\Delta \ge 1$, we conclude:
    \[
        N \ge \frac{9\Delta^2}{4} \frac{m}{n} \ge \frac{m}{n}
    \]
    which completes the proof.
\end{proof}
Although the last inequality in the proof appears wasteful, retaining the factor $\frac{9\Delta^2}{4}$ does not improve the bound of Theorem \ref{thm: main} since this bound depends on $\log N$.

\subsubsection{Location of the Centers}

{Also updated the proof to get both bounds on $\eta_g$.}
\begin{proof}[Proof of Lemma \ref{lem: delta_centerOfMass}]
    Let $g(K) = \eta_g e_0$ denote the barycenter of $K$. Since
    \[
        K=\operatorname{conv}(e_0+Q,\Qs)\subseteq\{x:0\le \langle x,e_0\rangle\le 1\},
    \]
    and $g(K)\in K$, we have $\eta_g\ge 0$.

    By Grünbaum's inequality, any half-space containing the barycenter must contain at least $1/e$ of the body's total volume. Consider the "upper" and "lower" half-space defined by the cut at $\eta_g$:
    \[
        K_{\ge \eta_g} := K \cap \{x : \langle x, e_0 \rangle \ge \eta_g\}
        \qquad 
        K_{\le \eta_g} := K \cap \{x : \langle x, e_0 \rangle \le \eta_g\}
    \]
    Then
    \begin{equation} \label{eq: grunbaum}
        \min\{ |K_{\ge \eta_g}|, |K_{\le \eta_g}| \} \ge \frac{1}{e} |K|.
    \end{equation}
    {We will show that if $\eta_g$ is outside the claimed range, then either $|K_{\ge \eta_g}|$ is too small or $|K_{\le \eta_g}|$ is too small, contradicting the above inequality. }

    \step{1. Lower bound on $|K|$}
    We approximate $K$ from below by a cone with base $\Qs$ (at height 0) and apex $e_0+x_1$ (a point in the top face). Since $\Qs$ and $e_0+Q$ are subsets of $K$, we have:
    \[
        |K| \ge \frac{1}{n+1} |\Qs| \cdot \text{height} = \frac{1}{n+1} |\Qs|.
    \]
    Using the volume bound $|\Qs| \ge (1-o_n(1))|B_2^n|$ from Lemma \ref{lem:Qsharp_vol}, we obtain for sufficiently large $n$:
    \begin{equation} \label{eq: K_vol_lower}
        |K| \ge \frac{1}{2(n+1)} |B_2^n|.
    \end{equation}

    \step{2. Upper bound on $|K_{\ge \eta_g}|$}
    The cross-section of $K$ at height $t \in [0,1]$ is given by the Minkowski sum:
    \[
        K_t := \{x \in \R^n : (t, x) \in K\} = (1-t)\Qs + tQ.
    \]
    The volume of the upper part is simply the integral of these cross-sections from $\eta_g$ to $1$:
    \[
        |K_{\ge \eta_g}| = \int_{\eta_g}^1 |K_t| \, dt.
    \]
    To bound $|K_t|$, we use Urysohn's inequality and the additivity of Gaussian mean width:
    \[
        w_G(K_t) = (1-t)w_G(\Qs) + t w_G(Q).
    \]
    Since $\Qs \subseteq B_2^n$, we have $w_G(\Qs) \le w_G(B_2^n)$. From Lemma \ref{lem:Q_meanWidth}, we have $w_G(Q) \le 0.05 w_G(B_2^n)$. Thus,
    \[
        w_G(K_t) \le  (1 - 0.95t) w_G(B_2^n).
    \]
    Applying Urysohn's inequality yields
    \[
        |K_t| \le \left(\frac{w_G(K_t)}{w_G(B_2^n)}\right)^n |B_2^n| \le (1 - 0.95t)^n |B_2^n|.
    \]
    Integrating this bound:
    \begin{align*}
        |K_{\ge \eta_g}| 
        &\le |B_2^n| \int_{\eta_g}^1 (1 - 0.95t)^n \, dt 
        = |B_2^n| \left[ \frac{-(1-0.95t)^{n+1}}{0.95(n+1)} \right]_{\eta_g}^1 
        \le \frac{1}{0.95(n+1)} (1 - 0.95\eta_g)^{n+1} |B_2^n|.
    \end{align*}
    where in the last step we dropped the nonnegative term
    $\frac{1}{0.95(n+1)}(1-0.95)^{n+1}|B_2^n|$.
    Similarly, for the lower part, we have
    \begin{align*}
        |K_{\le \eta_g}| 
        &\le |B_2^n| \int_0^{\eta_g} (1 - 0.95t)^n \, dt 
        = \frac{1}{0.95(n+1)} (1 - (1 - 0.95\eta_g)^{n+1}) |B_2^n|.
    \end{align*}

    \step{3. Contradiction}
    Combining the bounds from Step 1 and Step 2 with Grünbaum's inequality \eqref{eq: grunbaum}:
    \[
         \frac{|K_{\ge \eta_g}|}{|K|} 
        \le \frac{\frac{1}{0.95(n+1)} (1 - 0.95\eta_g)^{n+1} |B_2^n|}{\frac{1}{2(n+1)} |B_2^n|}
        = \frac{2}{0.95} (1 - 0.95\eta_g)^{n+1}.
    \]
    If we assume $\eta_g > \frac{10}{n+1}$, the right-hand side is bounded above by 
    \[
        \frac{2}{0.95} \left(1 - \frac{9.5}{n+1}\right)^{n+1} \approx \frac{2}{0.95} e^{-9.5} < \frac{1}{e}.
    \]
    This is a contradiction. Conversely, for the lower bound, if we assume $\eta_g = \frac{1}{10(n+1)}$, then our integral yields 
    $$
        \frac{|K_{\ge \eta_g}|}{|K|} 
        \le
        \frac{2}{0.95} (1-(1 - 0.95\eta_g)^{n+1}) \approx \frac{2}{0.95} (1 - e^{-0.095}) < \frac{1}{e}, 
    $$
    which is again a contradiction. Therefore, $\eta_g$ must lie in the claimed range. 
\end{proof}
Before proving Lemma \ref{lem: delta_Santalopt}, let us give a geometric interpretation of the polar body $(K-\eta e_0)^\circ$. First observe that
\begin{align*}
    (K-\eta e_0)^\circ
    = \big((1-\eta)e_0+Q\big)^\circ \cap \big(-\eta e_0+\Qs\big)^\circ\,.
\end{align*}
Write any $y\in\R^{n+1}$ as $y=y_0e_0+y_\perp$ with $y_\perp\perp e_0$. Then
     \[
       h_{(1-\eta)e_0+Q}(y)=(1-\eta)y_0+h_Q(y_\perp)
     \] 
     and so
    \[
       \big((1-\eta)e_0+Q\big)^\circ
        =\Big\{y:\ y_0\le \tfrac{1}{1-\eta}\ \text{ and }\ y_\perp\in\big(1-(1-\eta)y_0\big)Q^\circ\Big\}.
    \]
In other words, $\big((1-\eta)e_0+Q\big)^\circ$ is an unbounded cone with the apex at $\frac{1}{1-\eta}e_0$ and cross-section $Q^\circ$ at height $y_0=0$.
Similarly, we obtain
\[
    \big(-\eta e_0+\Qs\big)^\circ
    =\Big\{y:\ -\tfrac{1}{\eta} \le y_0\ \text{ and }\ y_\perp\in\big(1+\eta y_0\big)Q_1\Big\}.
\]
which is an unbounded cone with apex at $-\frac{1}{\eta}e_0$ and cross-section $Q_1$ at height $y_0=0$. The polar body $(K-\eta e_0)^\circ$ is the intersection of these two cones.
For simplicity, we denote the two cones as $\Cu$ and $\Cm$ respectively:
\begin{align}
    \label{eq: Cone_Cu_Cm}
    \Cu=\Cu(\eta):=&\Big\{y:\  y_0\le \tfrac{1}{1-\eta}\ \text{ and }\ y_\perp\in\big(1-(1-\eta)y_0\big)Q^\circ\Big\},\ \text{ and}\\
    \nonumber
    \Cm=\Cm(\eta):=&\Big\{y:\ -\tfrac{1}{\eta}\le y_0\ \text{ and }\ y_\perp\in\big(1+\eta y_0\big)Q_1\Big\}.
\end{align}

Looking first at the cross-section at height $y_0=0$, it is worth noting that, unlike $Q_1^\circ$, the body $Q^\circ$ is much larger than $Q_1$ in volume. Consequently, for a typical slice at height $y_0=h$ (as long as $h$ is not too close to the height $\frac{1}{1-\eta}$ of the apex of $\Cu$), one expects the constraint coming from $\Cm$ to be the dominant one. We now make this intuition precise.

\begin{lemma}\label{lem:Cmprime_dominance}
Let $\eta\in(0,1)$. Fix $t=0.02$ and define
\[
    \Cm' := \Big\{y:\ -\tfrac{1}{\eta}\le y_0\le 0.98\ \text{ and }\ y_\perp\in\big(1+\eta y_0\big)Q_t^\circ\Big\}.
\]
Then $\Cm'\subseteq\Cm$ and
\[
    |\Cm'|\ge (1-o_n(1))|\Cm \cap \{ y \,:\, y_0 \le 0.98 \}|.
\]
Moreover,
\[
    \Cm' \subseteq \Cu.
\]
\end{lemma}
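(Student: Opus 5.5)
The plan is to compare the two cones slice by slice, using only the inclusions $Q_t^\circ = tQ^\circ \cap B_2^n \subseteq B_2^n \subseteq Q_1$ from \eqref{eq:Qt_polar} and \eqref{eq:def_Qt}, and the volume estimates of Lemma~\ref{lem:Qsharp_vol}. For $y_0$ in the relevant range, the slice of $\Cm$ at height $y_0$ is $(1+\eta y_0)Q_1$, and the slice of $\Cm'$ is $(1+\eta y_0)Q_t^\circ$. The factor $1+\eta y_0$ is nonnegative exactly when $y_0\ge -1/\eta$.

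\emph{Inclusion $\Cm'\subseteq\Cm$.} Since $Q_t^\circ\subseteq B_2^n\subseteq Q_1$, each slice of $\Cm'$ lies in the corresponding slice of $\Cm$. The height range $[-1/\eta,0.98]$ of $\Cm'$ is contained in the range $y_0\ge -1/\eta$ of $\Cm$, so the inclusion follows.

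\emph{Volume comparison.} By Fubini,
\[
|\Cm'|=\int_{-1/\eta}^{0.98}(1+\eta y_0)^n\,|Q_t^\circ|\,dy_0,
\qquad
|\Cm\cap\{y_0\le 0.98\}|=\int_{-1/\eta}^{0.98}(1+\eta y_0)^n\,|Q_1|\,dy_0 .
\]
The ratio of the two volumes is therefore $|Q_t^\circ|/|Q_1|$. Lemma~\ref{lem:Qsharp_vol} applies both with $t=0.02$ and with $t=1$, since both are at least $0.02$. It gives $|Q_t^\circ|\ge(1-o_n(1))|B_2^n|$ and $|Q_1|\le(1+o_n(1))|B_2^n|$, so the ratio is at least $1-o_n(1)$.

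\emph{Inclusion $\Cm'\subseteq\Cu$.} Take $y\in\Cm'$. First, $y_0\le 0.98<1\le \frac{1}{1-\eta}$, so the height condition of $\Cu$ holds. Next, $Q_t^\circ\subseteq tQ^\circ$ and $Q^\circ$ is convex, symmetric and contains the origin, so dilates of $Q^\circ$ are nested. Hence it suffices to show
\[
f(y_0):=\big(1-(1-\eta)y_0\big)-t(1+\eta y_0)\ \ge 0 \quad\text{for } y_0\in[-1/\eta,\,0.98].
\]
This also guarantees that $1-(1-\eta)y_0\ge 0$ on that range. The function $f$ is affine, so it is enough to check the two endpoints.
\begin{itemize}
\item At $y_0=-1/\eta$ we get $f=1+\frac{1-\eta}{\eta}>0$.
\item At $y_0=0.98$ with $t=0.02$ we get $f=(0.02+0.98\eta)-(0.02+0.0196\eta)=0.9604\,\eta\ge 0$.
\end{itemize}
This endpoint check is the only place where the specific numbers $t=0.02$ and $0.98$ matter; it is where the choice $0.98=1-t$ is used, and it is the step I expect to be the main obstacle. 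Everything else reduces to Lemma~\ref{lem:Qsharp_vol} together with the fact that all slices of both cones are dilates by the common factor $1+\eta y_0$.
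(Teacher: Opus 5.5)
Your proposal is correct and follows the paper's proof. It uses the same inclusions $Q_t^\circ\subseteq B_2^n\subseteq Q_1$, the same slice-by-slice volume comparison via Lemma~\ref{lem:Qsharp_vol} (applied with $t=0.02$ and $t=1$), and the same reduction of $\Cm'\subseteq\Cu$ to $t(1+\eta y_0)\le 1-(1-\eta)y_0$. You verify that inequality by checking an affine function at the endpoints, where the paper solves it as $y_0\le\frac{0.98}{1-0.98\eta}$; you also state explicitly the height condition and the nonnegativity of the dilation factors, which the paper leaves implicit.
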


\begin{proof}
The inclusion $\Cm'\subseteq\Cm$ follows from $Q_t^\circ\subseteq B_2^n\subseteq Q_1$.
The second claim follows from Lemma~\ref{lem:Qsharp_vol} to get $|Q_t^\circ|\ge(1-o_n(1))|Q_1|$ and integration of the cross-sections.
Finally, take $y=y_0e_0+y_\perp\in\Cm'$. Using $Q_t^\circ=tQ^\circ\cap B_2^n$, we have
\[
    y_\perp\in (1+\eta y_0)Q_t^\circ
    =(1+\eta y_0)\big(tQ^\circ\cap B_2^n\big)
    \subseteq t(1+\eta y_0)Q^\circ.
\]
Therefore, if $t(1+\eta y_0)\le 1-(1-\eta)y_0$, then $y \in \Cu$. Now, the inequality is equivalent to
\[    
    y_0\le \frac{1-t}{1-(1-t)\eta} = \frac{0.98}{1-0.98\eta}.
\]
Since $\frac{0.98}{1-0.98\eta}\ge 0.98$ for all $\eta\in(0,1)$, this holds for every $y\in\Cm'$. Therefore $\Cm'\subseteq\Cu$.
\end{proof}

\begin{proof}[Proof of Lemma \ref{lem: delta_Santalopt}]
Let $s(K)=\eta_s e_0$ and consider the shifted body $K(\eta_s)=K-\eta_s e_0$, whose Santal\'o point is at the origin. For simplicity, set $\eta:=\eta_s$.

It is standard (see, e.g., \cite{Pisier1989}) that $s(K(\eta))=0$ if and only if the barycenter of $K(\eta)^\circ$ is at the origin. Hence Grünbaum's inequality applied to $K(\eta)^\circ$ gives
\begin{equation} \label{eq:grunbaum_Kpolar}
    \frac{|K(\eta)^\circ \cap \{y : \langle y, e_0 \rangle \le 0\}|}{|K(\eta)^\circ|} \ge \frac{1}{e}.
\end{equation}

Recall that $K(\eta)^\circ=\Cu\cap\Cm$. Therefore
\[
    |K(\eta)^\circ\cap\{y_0\le 0\}|\le |\Cm\cap\{y_0\le 0\}|.
\]
Since for each $h\in[-\tfrac{1}{\eta},0]$ we have the cross-section
\[
    \Cm\cap\{y_0=h\}=(1+\eta h)Q_1,
\]
we obtain by integration
\begin{equation}\label{eq:Cm_negative_volume}
    |\Cm\cap\{y_0\le 0\}|=\int_{-1/\eta}^0 (1+\eta h)^n|Q_1|\,dh
    =\frac{|Q_1|}{\eta(n+1)}.
\end{equation}

On the other hand, by Lemma~\ref{lem:Cmprime_dominance},
\[
    |K(\eta)^\circ|\ge |\Cm'|.
\]
Using the explicit cross-sections
\[
    \Cm'\cap\{y_0=h\}=(1+\eta h)Q_t^\circ,\qquad h\in\Big[-\tfrac{1}{\eta},0.98\Big],
\]
we get
\begin{equation}\label{eq:Cmprime_volume}
    |\Cm'|=\int_{-1/\eta}^{0.98} (1+\eta h)^n|Q_t^\circ|\,dh
    =\frac{(1+0.98\eta)^{n+1}}{\eta(n+1)}\,|Q_t^\circ|.
\end{equation}

Combining \eqref{eq:grunbaum_Kpolar}, \eqref{eq:Cm_negative_volume}, and \eqref{eq:Cmprime_volume}, we arrive at
\[
    \frac{1}{e}
    \le \frac{|K(\eta)^\circ\cap\{y_0\le 0\}|}{|K(\eta)^\circ|}
    \le \frac{|Q_1|}{|Q_t^\circ|}\,(1+0.98\eta)^{-(n+1)}.
\]
By Lemma~\ref{lem:Qsharp_vol} (applied with $t=0.02$ and with $t=1$ for $Q_1$), we have $|Q_1|=(1+o_n(1))|B_2^n|$ and $|Q_t^\circ|=(1-o_n(1))|B_2^n|$, hence $|Q_1|/|Q_t^\circ|=1+o_n(1) \le 2$ for sufficiently large $n$. 
Thus,
\[
    \frac{1}{e}\le 2(1+0.98\eta)^{-(n+1)},
\]
Taking logarithms yields $(n+1)\log(1+0.98\eta)\le 1+o_n(1)$, which in particular implies $\eta\le 1$ for all sufficiently large $n$. Using $\log(1+u)\ge u/2$ for $u\in[0,1/2]$ (with $u=0.98\eta$), we conclude
\[
    \frac{n+1}{2}\,0.98\eta \le 1+o_n(1),
\]
and hence $\eta\le \frac{10}{n+1}$ for all sufficiently large $n$.
\end{proof}

\section{Approximation by a polytope with few facets}
The dual statement does not follow by simply polarizing Theorem~\ref{thm: main}: polarity does not commute with translation, so shifting the body before taking polars produces a genuinely different family of convex bodies. Nevertheless, the symmetry of our construction allows us to adapt the previous argument with only minor changes.

We will take the body in Theorem~\ref{thm: main-dual} to be
\[
    K_{\ref{thm: main-dual}} := K(\eta_g)^\circ.
\]
Since $g(K(\eta_g))=0$, the Santal\'o point of $K(\eta_g)^\circ$ is at the origin. 
The barycenter of this body is located on the $e_0$ axis because of the symmetry of the construction of $K$.
Denote
\[
    g\big(K(\eta_g)^\circ\big)=\gamma_g e_0.
\]
In the sequel we will need an explicit description of the polar of the shifted body $K(\eta)^\circ-h e_0$ (and later we will substitute $\eta=\eta_g$ and $h\in[0,\gamma_g]$).

\begin{lemma}\label{lem:polar_shift_identity}
Let $\eta\in[0,1)$ and let $h$ satisfy $- \frac{1}{\eta} \le h \le \frac{1}{1-\eta}$.
Set
\[
    \kappa:=\frac{1-(1-\eta)h}{1+\eta h}.
\]
Then
\[
    \big(K(\eta)^\circ-h e_0\big)^\circ
    =\frac{1}{1-(1-\eta)h}\,K(\eta,\kappa).
\]
\end{lemma}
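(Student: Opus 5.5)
The plan is to compute the polar of the shifted polar body directly. Set $A:=(1-\eta)e_0+Q$ and $B:=-\eta e_0+\Qs$, so that $K(\eta)=\operatorname{conv}(A\cup B)$. We express $K(\eta)^\circ-he_0$ as a set cut out by linear inequalities indexed by points of $A\cup B$. Polarizing then yields the convex hull of suitably rescaled copies of $A$ and $B$, and we read off the scalings.

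\step{Step 1: Polar of a shift}
For a convex body $L$ containing $0$ in its interior, and $z\in\operatorname{int}L^\circ$, we have
\[
L^\circ-z=\{y:\ \langle y+z,x\rangle\le 1\ \forall x\in L\}=\Big\{y:\ \Big\langle y,\tfrac{x}{1-\langle z,x\rangle}\Big\rangle\le 1\ \forall x\in L\Big\}.
\]
Here $1-\langle z,x\rangle>0$ for all $x\in L$, since $z$ is interior. By the bipolar theorem,
\[
(L^\circ-z)^\circ=\operatorname{conv}\Big\{\tfrac{x}{1-\langle z,x\rangle}:x\in L\Big\}.
\]
The map $x\mapsto x/(1-\langle z,x\rangle)$ is a projective map that preserves convexity of segments in the region where the denominator is positive. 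Hence it suffices to take $x$ ranging over the generating set $A\cup B$: the image of $\operatorname{conv}(A\cup B)$ lies in the convex hull of the images of $A$ and $B$.

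\step{Step 2: Apply with $L=K(\eta)$ and $z=he_0$}
Every $x\in A$ has $\langle he_0,x\rangle=(1-\eta)h$. Every $x\in B$ has $\langle he_0,x\rangle=-\eta h$. The denominators are therefore constant on each piece:
\[
\frac{1}{1-(1-\eta)h}\,A\qquad\text{and}\qquad \frac{1}{1+\eta h}\,B.
\]
Factoring out $\frac{1}{1-(1-\eta)h}$, the second piece becomes
\[
\frac{1}{1-(1-\eta)h}\cdot\frac{1-(1-\eta)h}{1+\eta h}\,B=\frac{1}{1-(1-\eta)h}\,\kappa B .
\]
Since $\kappa B=-\kappa\eta e_0+\kappa\Qs$, taking the convex hull of the two pieces gives exactly $\frac{1}{1-(1-\eta)h}K(\eta,\kappa)$.

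\step{Main obstacle: the endpoints and the interiority condition}
The main care point is the range of $h$. Step 1 requires $he_0$ to lie in the interior of $K(\eta)^\circ$, or at least that the denominators stay positive on $K(\eta)$. The positivity conditions are exactly $1-(1-\eta)h>0$ and $1+\eta h>0$, which is the stated range with strict inequalities. We will check that $\pm$ directions in $\R^n$ are harmless: $\langle he_0,x\rangle$ depends only on the $e_0$-coordinate, which on $K(\eta)$ lies in $[-\eta,1-\eta]$. Then the denominator is positive on all of $K(\eta)$ by linearity in that coordinate, and the projective map is well defined.

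At the endpoints $h=\frac{1}{1-\eta}$ or $h=-\frac1\eta$, the origin lies on the boundary of the shifted body. Both sides must then be read as degenerate or unbounded: the factor blows up or $\kappa$ becomes $0$ or $\infty$. We would either note that the identity holds in the interior of the range, which is all that is used later with $h\in[0,\gamma_g]$, or interpret it by continuity.
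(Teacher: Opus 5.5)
Your proof is correct, but it takes a different route from the paper's.

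\emph{The paper's route.} It works on the dual side. It writes $K(\eta)^\circ=\Cu\cap\Cm$, the intersection of the two cones polar to the top facet $A=(1-\eta)e_0+Q$ and the bottom facet $B=-\eta e_0+\Qs$. It then observes that translating each cone by $-he_0$ is the same as dilating it, by $1-(1-\eta)h$ and $1+\eta h$ respectively, because each cone's defining inequality is affine in $y_0$. Finally it polarizes back using $(\Cu\cap\Cm)^\circ=\operatorname{conv}(\Cu^\circ\cup\Cm^\circ)$, together with $\Cu^\circ=\operatorname{conv}(A\cup\{0\})$ and $\Cm^\circ=\operatorname{conv}(B\cup\{0\})$.

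\emph{Your route.} You use the general identity $(L^\circ-z)^\circ=\phi_z(L)$ for the projective map $\phi_z(x)=x/(1-\langle z,x\rangle)$. You reduce to the generating set $A\cup B$, on which $\phi_z$ is a dilation on each piece because $\langle e_0,\cdot\rangle$ is constant on $A$ and on $B$. The reduction step is cleanest via explicit reweighting. If $x=\sum\lambda_i x_i$ is a convex combination, then $\phi_z(x)=\sum\mu_i\phi_z(x_i)$ with $\mu_i=\lambda_i(1-\langle z,x_i\rangle)/(1-\langle z,x\rangle)\ge 0$ and $\sum\mu_i=1$. This gives $\phi_z(\operatorname{conv}(A\cup B))\subseteq\operatorname{conv}\phi_z(A\cup B)$ directly.

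\emph{What each buys.} Your argument is more general. It makes the range of validity transparent (positivity of the two denominators) and avoids unbounded cones and the closure in the intersection/convex-hull duality. The paper's argument reuses the cone description $\Cu,\Cm$ it needs anyway for Lemmas~\ref{lem:Cmprime_dominance}, \ref{lem: delta_Santalopt} and \ref{lem:gamma_g_bound}.

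\emph{Endpoints.} Both proofs divide by $1-(1-\eta)h$ and $1+\eta h$, so both need strict inequalities. Your remark that the endpoint cases of the stated range are degenerate is correct, and it is harmless. The lemma is only applied for $h$ between $0$ and $\gamma_g$, where $he_0$ is an interior point of $K(\eta_g)^\circ$.

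\emph{One small point.} For $\eta=0$ the origin lies on the boundary of $K(0)$, so your Step~1 hypothesis $0\in\operatorname{int}L$ fails. Your argument never actually uses it. It uses only positivity of the denominators and the bipolar theorem, which gives $\overline{\operatorname{conv}}(\phi_z(L)\cup\{0\})$. This equals $\phi_z(L)$ here, since $\phi_z(L)$ is compact, convex and contains $0$.
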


\begin{proof}
Recall that $K(\eta)^\circ=\Cu(\eta)\cap\Cm(\eta)$. Since translation commutes with intersection and for convex sets containing the origin one has $(A\cap B)^\circ=\operatorname{conv}(A^\circ,B^\circ)$, we obtain
\[
    \big(K(\eta)^\circ-h e_0\big)^\circ
    =\operatorname{conv}\big((\Cu-h e_0)^\circ,\,(\Cm-h e_0)^\circ\big),
\]
where $\Cu=\big((1-\eta)e_0+Q\big)^\circ$ and $\Cm=\big(-\eta e_0+\Qs\big)^\circ$. Writing $y=y_0e_0+y_\perp$ and using the support-function representation, we can define $\Cu$ by the inequality
\[
    (1-\eta)y_0+h_Q(y_\perp)\le 1.
\]
Replacing $y$ by $y-h e_0$ yields
\[
    (1-\eta)y_0+h_Q(y_\perp)\le 1-(1-\eta)h.
\]
Equivalently, $\Cu-h e_0=(1-(1-\eta)h)\,\Cu$. Similarly, one checks that $\Cm-h e_0=(1+\eta h)\,\Cm$.

Taking polars and using scaling as well as the double polar identity $(A^\circ)^\circ=\operatorname{conv}(A\cup\{0\})$, we obtain
\begin{align*}
    (\Cu-h e_0)^\circ
    &=\frac{1}{1-(1-\eta)h}\,\operatorname{conv}\big((1-\eta)e_0+Q,\,0\big),
    \intertext{and}
    (\Cm-h e_0)^\circ
    &=\frac{1}{1+\eta h}\,\operatorname{conv}\big(-\eta e_0+\Qs,\,0\big).
\end{align*}
Since $Q$ and $Q_1^\circ$ contain the origin,  $1-\eta>0$,  $-\eta<0$, taking the convex hull of $(\Cu-h e_0)^\circ$ and $(\Cm-h e_0)^\circ$ amounts to taking the convex hull of the nonzero parts. Therefore,
\begin{align*}
    \big(K(\eta)^\circ-h e_0\big)^\circ
    &=\operatorname{conv}\Big(\frac{1}{1-(1-\eta)h}\big((1-\eta)e_0+Q\big),\,\frac{1}{1+\eta h}\big(-\eta e_0+\Qs\big)\Big)\\
    &=\frac{1}{1-(1-\eta)h}\,\operatorname{conv}\Big((1-\eta)e_0+Q,\,\kappa\big(-\eta e_0+\Qs\big)\Big)\\
    &=\frac{1}{1-(1-\eta)h}\,K(\eta,\kappa),
\end{align*}
which proves the claim.
\end{proof}

\begin{rem}
The proof of the next lemma follows the same scheme as the proof of Lemma~\ref{lem: delta_Santalopt}: one combines Grünbaum's inequality with the cone decomposition $K(\eta)^\circ=\Cu\cap\Cm$ and compares the relevant volumes via cross-section integration.
\end{rem}

The next lemma shows that the barycenter of the body $K(\eta_g)^\circ$ is not too far from its top vertex.
As before, pinpointing the location of the barycenter relies on Gr\"{u}nbaum's inequality.
\begin{lemma}\label{lem:gamma_g_bound}
    The barycenter of $K(\eta_g)^\circ$, $\gamma_g e_0$ satisfies
   $$\gamma_g \ge -10\log(2e)\,.$$
\end{lemma}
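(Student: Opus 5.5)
We follow the scheme of the proof of Lemma~\ref{lem: delta_Santalopt}, with the roles of the half-spaces reversed. Set $\eta:=\eta_g$, so that $K(\eta)^\circ=\Cu\cap\Cm$. By Lemma~\ref{lem: delta_centerOfMass}, $\frac{1}{10(n+1)}\le\eta\le\frac{10}{n+1}$. Suppose that $\gamma:=\gamma_g<-10\log(2e)$. Grünbaum's inequality applied to $K(\eta)^\circ$ with the half-space $\{y_0\le \gamma\}$, which contains the barycenter, gives
\[
    |K(\eta)^\circ\cap\{y_0\le \gamma\}|\ge \tfrac1e\,|K(\eta)^\circ|.
\]
The plan is to show that the left side is too small.

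\emph{Upper bound on the lower part.} We bound it using only the cone $\Cm$. Its cross-section at height $h$ is $(1+\eta h)Q_1$, so
\[
    |K(\eta)^\circ\cap\{y_0\le\gamma\}|\le\int_{-1/\eta}^{\gamma}(1+\eta h)^n|Q_1|\,dh=\frac{(1+\eta\gamma)^{n+1}}{\eta(n+1)}|Q_1|.
\]
This assumes $1+\eta\gamma\ge 0$; otherwise the slice is empty and there is nothing to prove.

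\emph{Lower bound on the whole body.} As in the proof of Lemma~\ref{lem: delta_Santalopt}, Lemma~\ref{lem:Cmprime_dominance} gives $|K(\eta)^\circ|\ge|\Cm'|$. We then use \eqref{eq:Cmprime_volume}, which yields $|\Cm'|\ge \frac{1}{\eta(n+1)}|Q_t^\circ|$ after discarding the factor $(1+0.98\eta)^{n+1}\ge1$. Lemma~\ref{lem:Qsharp_vol} gives $|Q_1|/|Q_t^\circ|\le 2$. Combining,
\[
    \tfrac1e\le 2(1+\eta\gamma)^{n+1}\le 2\exp\big((n+1)\eta\gamma\big).
\]
Since $\gamma<0$ and $\eta\ge\frac{1}{10(n+1)}$, we have $(n+1)\eta\gamma\le \gamma/10<-\log(2e)$. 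Hence the right side is less than $1/e$, a contradiction. Therefore $\gamma_g\ge -10\log(2e)$.

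The main point needing care is the direction of monotonicity in the last step. Because $\gamma<0$, we need the \emph{lower} bound on $\eta_g$ from Lemma~\ref{lem: delta_centerOfMass}, which is exactly why that lemma was stated two-sided. The other detail is the edge case $\gamma<-1/\eta$, where the lower part is empty; this also contradicts Grünbaum's inequality. The volume comparison itself is routine once Lemma~\ref{lem:Cmprime_dominance} and Lemma~\ref{lem:Qsharp_vol} are invoked.
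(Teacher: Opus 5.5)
Your proposal is correct and follows the paper's proof essentially step for step: Grünbaum's inequality on the half-space $\{y_0\le\gamma_g\}$, the numerator bounded via $\Cm$, the denominator bounded below by $|\Cm'|$, the ratio $|Q_1|/|Q_t^\circ|\le 2$, and the two-sided bound on $\eta_g$ from Lemma~\ref{lem: delta_centerOfMass}. The only difference is cosmetic: you argue by contradiction using $1+x\le e^x$, whereas the paper derives $\eta_g\gamma_g\ge-\frac{\log(2e)}{n+1}$ directly via $e^{-x}\ge 1-x$.
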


\begin{proof}
It is enough to prove the above inequality assuming that $\gamma_g <0$.
Set $\eta:=\eta_g$ and write $L:=K(\eta)^\circ$. By symmetry, $g(L)=\gamma_g e_0$ for some $\gamma_g\in\R$.
Consider the translated body $L-\gamma_g e_0$, whose barycenter is at the origin. Applying Grünbaum's inequality in direction $e_0$ yields
\begin{equation}\label{eq:grunbaum_gamma_g}
    \frac{|L\cap\{y_0\le \gamma_g\}|}{|L|}\ge \frac{1}{e}.
\end{equation}

We bound the numerator above using $L\subseteq \Cm(\eta)$:
\begin{align}\label{eq:gamma_g_numer}
    |L\cap\{y_0\le \gamma_g\}|
    &\le |\Cm\cap\{y_0\le \gamma_g\}|
      =\int_{-1/\eta}^{\gamma_g} (1+\eta h)^n|Q_1|\,dh
      =\frac{(1+\eta\gamma_g)^{n+1}}{\eta(n+1)}\,|Q_1|.
\end{align}

For the denominator, Lemma~\ref{lem:Cmprime_dominance} gives $\Cm'\subseteq \Cu\cap\Cm=L$, hence
\begin{align}\label{eq:gamma_g_denom}
    |L|\ge |\Cm'|
    =\int_{-1/\eta}^{0.98} (1+\eta h)^n|Q_t^\circ|\,dh
    =\frac{(1+0.98\eta)^{n+1}}{\eta(n+1)}\,|Q_t^\circ|.
\end{align}

Combining \eqref{eq:grunbaum_gamma_g}, \eqref{eq:gamma_g_numer}, and \eqref{eq:gamma_g_denom} yields
\[
    \frac{1}{e}
    \le \frac{|Q_1|}{|Q_t^\circ|}\left(\frac{1+\eta\gamma_g}{1+0.98\eta}\right)^{n+1}.
\]
By Lemma~\ref{lem:Qsharp_vol}, for all sufficiently large $n$ we have $\frac{|Q_1|}{|Q_t^\circ|}\le 2$. Therefore,
\[
    \left(\frac{1+\eta\gamma_g}{1+0.98\eta}\right)^{n+1}\ge \frac{1}{2e}
    \qquad\Longrightarrow\qquad
    1+\eta\gamma_g\ge (1+0.98\eta)\,(2e)^{-1/(n+1)}\ge (2e)^{-1/(n+1)}.
\]
Since $(2e)^{-1/(n+1)}=\exp\!\big(-\tfrac{\log(2e)}{n+1}\big)$ and $e^{-x}\ge 1-x$ for $x\ge 0$, we obtain
\[
    \eta\gamma_g\ge -\frac{\log(2e)}{n+1}.
\]
Using Lemma~\ref{lem: delta_centerOfMass}, we have $\eta\ge \frac{1}{10(n+1)}$ for all sufficiently large $n$, and hence
\[
    \gamma_g\ge -10\log(2e).
\]

\end{proof}

Lemmas \ref{lem:polar_shift_identity} and \ref{lem:gamma_g_bound} give us enough information to complete the proof of our second main result.
\begin{proof}[Proof of Theorem \ref{thm: main-dual}]
Fix an integer $N$ satisfying \eqref{eq: l_range_main dual} and set $m=nN$. As in the proof of Theorem~\ref{thm: main}, by choosing $C_{\ref{thm: main}}$ sufficiently large we may assume that $m$ satisfies the range requirements in Definition~\ref{def: parameter_conf}.
We take the body in Theorem~\ref{thm: main-dual} to be
\[
    \widetilde K:=K(\eta_g)^\circ.
\]
Since $g(K(\eta_g))=0$, we have $s(\widetilde K)=0$, and by with the notation introduced earlier, $g(\widetilde K)=\gamma_g e_0$.
Assume that $\xi$ lies on the line segment connecting these two points. Then $\xi=he_0$ for some
\[
    h\in[\min\{0,\gamma_g\},\,\max\{0,\gamma_g\}].
\]
Clearly, $\xi$ is an interior point of $\widetilde K$.
Let $P$ be a polytope satisfying
\begin{equation}\label{eq:dual_inclusion_assumption}
    P\subseteq \widetilde K-\xi\subseteq \lambda P,
    \qquad \text{where }\ \lambda:=\frac{n}{C_{\ref{thm: main}}\log N}.
\end{equation}
Since $0\in \operatorname{int}(\widetilde K-\xi)$ and $\widetilde K-\xi\subseteq \lambda P$, we get $0\in \operatorname{int}(P)$. Therefore $P^\circ$ is a bounded polytope, and the number of vertices of $P^\circ$ equals the number of facets of $P$.

Taking polars in \eqref{eq:dual_inclusion_assumption} yields
\begin{align}
    \label{eq:dual_polar_sandwich}
    \lambda^{-1}P^\circ\subseteq (\widetilde K-\xi)^\circ\subseteq P^\circ.
\end{align}
Now apply Lemma~\ref{lem:polar_shift_identity} with $\eta=\eta_g$ to obtain
\[
    (\widetilde K-\xi)^\circ
    =(K(\eta_g)^\circ-he_0)^\circ
    =\frac{1}{1-(1-\eta_g)h}\,K(\eta_g,\kappa),
\]
where
\[
    \kappa:=\frac{1-(1-\eta_g)h}{1+\eta_g h}.
\]
Scaling does not change the number of vertices, hence \eqref{eq:dual_polar_sandwich} implies that the polytope
\[
    P_1:= \big(1-(1-\eta_g)h\big)
\lambda^{-1} P^\circ    
\]
satisfies
\begin{equation}\label{eq:Keta_kappa_sandwich}
    P_1\subseteq K(\eta_g,\kappa)\subseteq \lambda\,P_1.
\end{equation}

We next bound $\max\{\kappa,1\}$ uniformly for $h$ on the segment between $0$ and $\gamma_g$.
If $h\ge 0$ then clearly $\kappa\le 1$.
If $h\le 0$, then $h\ge \gamma_g$ and Lemma~\ref{lem:gamma_g_bound} gives $\gamma_g\ge -10\log(2e)$. Using also Lemma~\ref{lem: delta_centerOfMass}, we have $\eta_g\le \frac{10}{n+1}$, so
for sufficiently large $n$,
\[
    1+\eta_g h  \ge \frac{1}{2}.
\]
for all sufficiently large $n$.
Therefore, for $h\le 0$,
\[
    \kappa=\frac{1-(1-\eta_g)h}{1+\eta_g h}
    \le 2(1+ 10\log(2e))\,.
\]

Finally, Lemma~\ref{lem: delta_centerOfMass} gives $0\le \eta_g\le \frac{10}{n+1}\le \frac{1}{\sqrt n}$ for large $n$,
so the hypotheses of Lemma~\ref{lem: hardness} apply to $K(\eta_g,\kappa)$.
Recalling that $\Delta=C_{\ref{def: C_config}}\sqrt{\log m}$ and $m=nN$, and using $N\ge C n^2$ from \eqref{eq: l_range_main},
we have $\log m=\log n+\log N\le 2\log N$ for large $n$, hence $\Delta^2\le C'\log N$ for a universal constant $C'$.
Thus, choosing $C_{\ref{thm: main}}$ sufficiently large (depending only on $C_\kappa$ and $C'$) ensures
\[
    \lambda=\frac{n}{C_{\ref{thm: main}}\log N}
    \le \frac{n}{96\Delta^2\max\{\kappa,1\}}.
\]
Applying Lemma~\ref{lem: hardness} to \eqref{eq:Keta_kappa_sandwich} yields that $P_1$ has at least $m/n=N$ vertices.
This concludes that $P$ has at least $N$ facets. 
\end{proof}

\appendix
\section{Standard concentration lemmas}
The proofs below rely on the standard Gaussian concentration inequality; see, e.g., \cite[Chapter~5]{Vershynin2018HDP}.
\begin{lemma}\label{lem:gaussian_concentration}
Let $G\sim N(0,I_n)$ and let $f:\R^n\to\R$ be $L$-Lipschitz with respect to $\|\cdot\|_2$.
Then for every $t\ge 0$,
\[
    \Prob\big(|f(G)-\Exp f(G)|\ge t\big)\le 2\exp\!\Big(-\frac{t^2}{2L^2}\Big).
\]
\end{lemma}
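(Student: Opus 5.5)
The plan is to derive this from the Gaussian isoperimetric inequality, stated in Lipschitz form. First reduce to a one-sided statement. If we show
\[
\Prob\big(f(G)-\Exp f(G)\ge t\big)\le \exp\!\Big(-\frac{t^2}{2L^2}\Big)
\]
for every $L$-Lipschitz $f$, then applying it to $-f$, which is also $L$-Lipschitz, and taking a union bound gives the factor $2$. By rescaling $f$ to $f/L$ we may also assume $L=1$.

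For the one-sided bound with the sharp constant $1/2$, I would use the Gaussian log-Sobolev inequality (Gross) together with the Herbst argument. Let $\gamma_n$ be the standard Gaussian measure. The log-Sobolev inequality states
\[
\operatorname{Ent}_{\gamma_n}(g^2)\le 2\Exp\|\nabla g(G)\|^2 .
\]
Set $g^2=e^{\lambda f}$ and assume first that $f$ is smooth and bounded with $\|\nabla f\|\le 1$; general Lipschitz $f$ then follows by mollification, truncation and a limiting argument, using Rademacher's theorem. Put $H(\lambda)=\Exp e^{\lambda f(G)}$. The inequality becomes
\[
\lambda H'(\lambda)-H(\lambda)\log H(\lambda)\le \frac{\lambda^2}{2}H(\lambda).
\]
Equivalently, with $K(\lambda)=\lambda^{-1}\log H(\lambda)$, this reads $K'(\lambda)\le 1/2$. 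Since $K(0^+)=\Exp f$, integrating gives
\[
\Exp e^{\lambda(f-\Exp f)}\le e^{\lambda^2/2}.
\]
Chebyshev's exponential inequality with the optimal choice $\lambda=t$ then yields $\exp(-t^2/2)$.

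An alternative route avoids log-Sobolev altogether. Let $A=\{f\le \operatorname{med} f\}$, so $\gamma_n(A)\ge 1/2$, and note that $\{f\ge \operatorname{med} f+t\}$ lies outside the $t$-neighbourhood of $A$. The Gaussian isoperimetric inequality (Borell, Sudakov–Tsirelson) then gives the tail bound $1-\Phi(t)\le \tfrac12 e^{-t^2/2}$ around the median. This version is stated around the median rather than the mean, and converting to the mean costs constants. So for the stated form I would commit to the Herbst argument, taking Gross's inequality as known; it can be proved, for instance, via tensorization from the two-point inequality and the central limit theorem, or via the Ornstein–Uhlenbeck semigroup.

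The main obstacle is exactly this constant. The crude Maurey–Pisier interpolation $\Phi(\theta)=G\sin\theta+G'\cos\theta$ gives only $\exp(-2t^2/(\pi^2L^2))$, which is weaker than the claimed $\exp(-t^2/(2L^2))$. That weaker bound would be harmless for the paper's applications in Lemmas \ref{lem: xi-configure} and \ref{lem:max_projection}, but the lemma as stated requires the sharp exponent, so some genuinely sharp tool (log-Sobolev or isoperimetry) must be invoked. As this is a textbook result, I would ultimately cite \cite[Chapter~5]{Vershynin2018HDP} for it.
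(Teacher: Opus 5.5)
The paper gives no proof of this lemma. It quotes it as the standard Gaussian concentration inequality with a pointer to \cite[Chapter~5]{Vershynin2018HDP}. Your proposal is correct and, unlike the paper, actually supplies a derivation.

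The reduction is right: prove the one-sided bound for $L=1$, apply it to $\pm f/L$, and take a union bound. The Herbst computation also checks out. With $g=e^{\lambda f/2}$ one has $2\Exp\|\nabla g\|^2\le \frac{\lambda^2}{2}H(\lambda)$, hence $\lambda H'-H\log H\le \frac{\lambda^2}{2}H$, which is $K'(\lambda)\le \frac12$ for $\lambda>0$. Together with $K(0^+)=\Exp f$ this gives $\Exp e^{\lambda(f-\Exp f)}\le e^{\lambda^2/2}$, and the Chernoff bound at $\lambda=t$ yields $e^{-t^2/2}$. The approximation step is routine. Mollifying an $L$-Lipschitz function directly gives $\|\nabla f_\varepsilon\|\le L$ with uniform convergence, so Rademacher's theorem is not even needed. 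Truncation is handled by dominated convergence, since $e^{\lambda|f|}$ is Gaussian-integrable.

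As for what each approach buys: the citation costs nothing, but the Gaussian concentration theorem in that chapter is stated in $\psi_2$-form, $\|f(G)-\Exp f(G)\|_{\psi_2}\le C\|f\|_{\mathrm{Lip}}$, with an unspecified absolute constant. So it does not literally give the exponent $t^2/(2L^2)$. Your route, the Tsirelson--Ibragimov--Sudakov inequality via Gross's log-Sobolev inequality, does deliver the sharp constant, at the price of importing Gross's inequality. As you observe, the paper only needs the lemma up to constants (for norm concentration in the proof of Lemma~\ref{lem: xi-configure}), so either version suffices there.
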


\begin{proof}[Proof of Lemma \ref{lem:max_projection}]
    We first remark that the statement of the lemma is trivial if $m=1$, as the probability bound $m^{-10}$ is $1$. Hence we may assume $m\ge 2$. In particular, $\log m >0$. Set $t := C_{\ref{lem:max_projection}} \sqrt{\log m}$. 
    
    \medskip
    \noindent \textit{Case 1: $X \sim N(0, I_n)$.} \\
    Fix $i \in [m]$. Since $v_i \in S^{n-1}$, the projection $\langle v_i, X \rangle$ follows a standard normal distribution $N(0,1)$. Applying the standard Gaussian tail bound yields
    \[
        \Prob\big(|\langle v_i, X \rangle| \ge t\big) \le 2\exp\left(-\frac{t^2}{2}\right).
    \]
    By the union bound over $i \in [m]$, we obtain
    \[
        \Prob\left(\max_{i\in[m]} |\langle v_i, X \rangle| \ge t\right)
        \le 2m \exp\left(-\frac{t^2}{2}\right)
        = 2 m^{1 - \frac{1}{2} C_{\ref{lem:max_projection}}^2}.
    \]
    Consequently, choosing $C_{\ref{lem:max_projection}}$ sufficiently large ensures the right-hand side is bounded by $m^{-10}$, as required.

\medskip
\noindent \textit{Case 2: $X \sim \text{unif}(\sqrt{n} S^{n-1})$.} \\
Let $G \sim N(0, I_n)$. We can represent $X$ as $X = \sqrt{n} G / \|G\|_2$. For any fixed $i \in [m]$, we then have
$$
    \langle X, v_i \rangle \sim \frac{\sqrt{n}g_1}{\sqrt{g_1^2 + \cdots + g_n^2}},  
$$
where $g_1, \dots, g_n$ are i.i.d. $N(0,1)$ random variables. Therefore,
\begin{align*}
    |\langle X, v_i \rangle| \ge t 
\qquad \Leftrightarrow \qquad
    \sqrt{n-t^2}|g_1| \ge t \sqrt{g_2^2 + \cdots + g_n^2} \,.
\end{align*}
If $t \ge \sqrt{n}$, this event is empty and its probability is zero. Hence we may assume $t < \sqrt{n}$. Conditioning on $g_2, \dots, g_n$ and applying the standard Gaussian tail bound gives
\begin{align*}
    \Prob\left(|g_1| \ge \frac{t}{\sqrt{n-t^2}} \sqrt{g_2^2 + \cdots + g_n^2}\,\bigg\vert\, g_2, \dots, g_n\right)
\le 
    2\exp\left(-\frac{t^2}{2(n-t^2)}(g_2^2 + \cdots + g_n^2)\right).
\end{align*}
Taking expectation and using $\mathbb{E}\exp(-\alpha g^2) = (1+2\alpha)^{-1/2}$ for $\alpha > 0$ and $g \sim N(0,1)$, we obtain
$$
\Prob\left(|g_1| \ge \frac{t}{\sqrt{n-t^2}} \sqrt{g_2^2 + \cdots + g_n^2}\right)
\le 2 \left(1+2\frac{t^2}{n-t^2}\right)^{-(n-1)/2} 
$$ 
For sufficiently large $C_{\ref{lem:max_projection}}$, we have $t = C_{\ref{lem:max_projection}}\sqrt{\log m} \ge 1$, and therefore
$$
    2 \left(1+2\frac{t^2}{n-t^2}\right)^{-(n-1)/2} 
\le 
    2\exp\left(-\frac{t^2}{2(n-t^2)}(n-1)\right)
\le 
    2\exp\left(-\frac{t^2}{4}\right)
= 
    2 m^{-\frac{1}{4} C_{\ref{lem:max_projection}}^2} = o(m^{-11}),
$$
provided $C_{\ref{lem:max_projection}}$ is sufficiently large. Finally, a union bound over $i \in [m]$ yields the desired estimate.

\medskip
\noindent \textit{Case 3: $X \sim \text{unif}(\sqrt{n} B_2^n)$.} \\
We can write $X = R U$, where $U$ is uniformly distributed on $S^{n-1}$ and $R = \|X\|_2 \in [0, \sqrt{n}]$.
Since $R \le \sqrt{n}$ almost surely, we have the pointwise inequality:
\[
    |\langle X, v_i \rangle| = R |\langle U, v_i \rangle| \le \sqrt{n} |\langle U, v_i \rangle|.
\]
Let $X' = \sqrt{n} U \sim \text{unif}(\sqrt{n} S^{n-1})$. The inequality above implies that the event $\{|\langle X, v_i \rangle| \ge t\}$ is contained in $\{|\langle X', v_i \rangle| \ge t\}$. Therefore, the probability bound established in Case 2 applies directly to Case 3.
\end{proof}

\begin{proof}[Proof of Lemma~\ref{lem: xi-configure}]
    Consider independent random vectors $X_1, \dots, X_m \sim N(0, I_n)$.
    
    First, by Lemma~\ref{lem:gaussian_concentration} (concentration of the norm) and the bound $m \le \exp(n/C_{\ref{lem: xi-configure}})$ from \eqref{eq:m_range}, a union bound implies that
    \[
        \|X_i\|_2 \in \left[\frac{1}{2}\sqrt{n}, 2\sqrt{n}\right] \quad \text{for all } i \in [m]
    \]
    with probability $1 - o_n(1)$, provided $C_{\ref{lem: xi-configure}}$ is sufficiently large.
    
    Similarly, for any fixed pair of indices $\{i,j\}$, we may condition on $X_i$ to see that $\langle X_i, X_j \rangle \sim \|X_i\|_2 Z$, where $Z \sim N(0,1)$ is independent of $X_i$. Standard Gaussian tail  bounds (or Lemma~\ref{lem:gaussian_concentration}) yield
    \[
        \Prob\left(|Z| \ge \frac{1}{2} C_{\ref{lem: xi-configure}}\sqrt{\log m}\right) \le 2\exp\left(-\frac{C_{\ref{lem: xi-configure}}^2 \log(m)}{8}\right)
        \le m^{-10},
    \]
    provided $C_{\ref{lem: xi-configure}}$ is sufficiently large. 
    Combining this with the event $\{\|X_i\|_2 \le 2\sqrt{n}\}$ (which holds with high probability as established above), we have
    \[
        |\langle X_i, X_j \rangle| = \|X_i\|_2 |Z| \le (2\sqrt{n}) \left(\frac{C_{\ref{lem: xi-configure}}\sqrt{\log m}}{2}\right) = \sqrt{n} C_{\ref{lem: xi-configure}} \sqrt{\log m}
    \]
    with probability $1- o(m^{-2})$. Taking a union bound over all $\binom{m}{2}$ pairs ensures this holds simultaneously for all distinct $\{i,j\}$ with probability $1 - o(1)$.
    
    Finally, set $x_i = \tfrac{X_i}{C_{\ref{lem: xi-configure}} \sqrt{\log m}}$ for each $i \in [m]$. The bounds established above imply that
    \[
        \|x_i\|_2 = \frac{\|X_i\|_2}{C_{\ref{lem: xi-configure}} \sqrt{\log m}} \in \left[\frac{\sqrt{n}}{2C_{\ref{lem: xi-configure}} \sqrt{\log m}}, \frac{2\sqrt{n}}{C_{\ref{lem: xi-configure}} \sqrt{\log m}}\right]
    \]
    and 
    \[
        |\langle x_i, x_j \rangle| = \frac{|\langle X_i, X_j \rangle|}{(C_{\ref{lem: xi-configure}} \sqrt{\log m})^2} \le \frac{\sqrt{n}}{C_{\ref{lem: xi-configure}} \sqrt{\log m}}
    \]
    simultaneously with high probability. Thus, such a configuration exists.
\end{proof}
\bibliographystyle{plain} 
\bibliography{ref}

\end{document}